\documentclass{amsart}

\usepackage[utf8]{inputenc} 
\usepackage[T1]{fontenc} 
\usepackage[english]{babel}
\usepackage{amsmath}
\usepackage{amssymb}

\theoremstyle{plain} 
\newtheorem{theorem}{Theorem} 
\newtheorem{lemma}[theorem]{Lemma} 
\newtheorem{corollary}[theorem]{Corollary} 

\theoremstyle{remark} 
 
\newtheorem*{remark}{Remark}

\DeclareMathOperator{\supp}{supp}

\DeclareMathOperator{\mre}{Re}
\begin{document} 

\title[Multiple Fourier series and capacity]{Rectangular summation of multiple Fourier series and multi-parametric capacity}
\date{\today} 

\author{Karl-Mikael Perfekt}
\address{Department of Mathematics and Statistics,
	University of Reading, Reading RG6 6AX, United Kingdom}
\email{k.perfekt@reading.ac.uk}

\begin{abstract}
We consider the class of multiple Fourier series associated with functions in the Dirichlet space of the polydisc. We prove that every such series is summable with respect to unrestricted rectangular partial sums, everywhere except for a set of zero multi-parametric logarithmic capacity. Conversely, given a compact set in the torus of zero capacity, we construct a Fourier series in the class which diverges on this set, in the sense of Pringsheim. We also prove that the multi-parametric logarithmic capacity characterizes the exceptional sets for the radial variation and radial limits of Dirichlet space functions. As a by-product of the methods of proof, the results also hold in the vector-valued setting.
\end{abstract}

\subjclass[2010]{}

\maketitle
\section{Introduction}
This article will consider unrestricted rectangular summation and other multi-parameter summation methods of the multiple Fourier series
\begin{equation} \label{eq:fintro}
f(\theta) \sim \sum_{\alpha \in \mathbb{Z}^n} a_\alpha e^{i(\alpha_1\theta_1 + \cdots \alpha_n \theta_n)}.
\end{equation}
To clarify this objective, note that there are several natural ways to form the partial sums of a multiple Fourier series. For example, one can attempt to sum the series via \textit{square partial sums},
$$ \lim_{M \to \infty}  \sum_{|\alpha_j| \leq M} a_\alpha e^{i(\alpha_1\theta_1 + \cdots \alpha_n \theta_n)},$$
\textit{spherical partial sums},
$$\lim_{R \to \infty} \sum_{\alpha_1^2 + \cdots + \alpha_n^2 \leq R} a_\alpha e^{i(\alpha_1\theta_1 + \cdots \alpha_n \theta_n)},$$
or \textit{unrestricted rectangular partial sums},
\begin{equation} \label{eq:introrectsum}
\lim_{\mathbb{N}^n \ni N \to \infty} \sum_{|\alpha_j| \leq N_j} a_\alpha e^{i(\alpha_1\theta_1 + \cdots \alpha_n \theta_n)},
\end{equation}
where $N \to \infty$ means that $\min_{1 \leq j \leq n} N_j \to \infty$, with no assumption made on the relationship between $N_j$ and $N_k$, $1 \leq j, k \leq n$. These three modes of convergence behave quite differently, and typically require different techniques to treat. The first two summation methods only depend on one parameter ($M$ or $R$), while the the third is an example of a multi-parameter summation method. We refer to \cite{AshWelland} and \cite[Ch. XVII]{ZygmundBook} for an introduction to multi-parameter summation methods for Fourier series.

Carleson \cite{Carleson} famously proved that the Fourier series of a function $f \in L^2(\mathbb{T})$ converges for almost every $\theta \in [0, 2\pi)$. This can be exploited to show that the Fourier series of a function $f \in L^2(\mathbb{T}^n)$, $n \geq 2$, converges with respect to square partial sums for almost every $\theta \in [0,2\pi)^n$ \cite{Antonov, Feff2, Sjolin, Tevzadze}. On the other hand, C. Fefferman \cite{Feff} constructed a continuous function $f \in C(\mathbb{T}^2)$ whose Fourier series diverges with respect to unrestricted rectangular sums for every $\theta \in [0,2\pi)^2$. Under spherical summation, the convergence question is still open for Fourier series of $f \in L^2(\mathbb{T}^n)$, $n \geq 2$, but we refer to \cite{Karagulyan} for some related negative results.

Let us now bring potential theory into the discussion. For a series $f(\theta) \sim \sum_{k\in \mathbb{Z}} a_k e^{ik\theta}$ such that $\sum_{k \in \mathbb{Z}} |k| |a_k|^2 < \infty$, Beurling \cite{Beurling} showed that $f(\theta)$ is summable for every $\theta \in \mathbb{T} \setminus E$, where $E$ is a set of zero logarithmic capacity. This was given a one-parameter generalization to multiple Fourier series by Lippman and Shapiro \cite{LippShap74}. They proved that if $f \in L^1(\mathbb{T}^n)$, $n\geq 2$, is as in \eqref{eq:fintro} and satisfies that $\sum_{\alpha \in \mathbb{Z}^n} (\alpha_1^2 + \cdots + \alpha_n^2) |a_\alpha|^2 < \infty$, then $f(\theta)$ is summable with respect to spherical partial sums, except for on a set $E \subset \mathbb{T}^n$ of zero ordinary capacity (logarithmic capacity for $n=2$ and Newtonian capacity for $n \geq 3$, under the identification $\mathbb{T}^n \simeq (\mathbb{R}/\mathbb{Z})^n$).

An interest in the multi-parameter summation method \eqref{eq:introrectsum} thus leads us to seek a suitable concept of capacity. A notion of multi-parametric logarithmic capacity has appeared recently in function-theoretic investigations of the Dirichlet space $\mathcal{D}(\mathbb{D}^n)$ of the polydisc \cite{BCLS15, BKKLSS16, Bergqvist, Kaptan}. In particular, in \cite{AMPS}, it was proven that bi-parameter logarithmic capacity characterizes the Carleson measures of $\mathcal{D}(\mathbb{D}^2)$. It is therefore natural to generalize Beurling's result to this context.

Before stating the main results, let us fix some notation. For a positive integer $n$, consider the multiple Fourier series
$$f(\theta) \sim \sum_{\alpha \in \mathbb{N}^n} a_\alpha e^{i(\alpha, \theta)},$$
where $\mathbb{N} = \{0, 1, 2, \ldots\}$, $\theta \in [0, 2\pi)^n$, and the coefficients belong to some Hilbert space $\mathcal{H}$, $a_\alpha \in \mathcal{H}$. We say that $f$ belongs to the Dirichlet space of the $n$-disc, $f \in \mathcal{D}(\mathbb{D}^n, \mathcal{H})$, if
$$\sum_{\alpha \in \mathbb{N}^n} (\alpha_1+1) \cdots (\alpha_n+1) \|a_\alpha\|_{\mathcal{H}}^2 < \infty.$$
If $\mathcal{H} = \mathbb{C}$, we simply write $\mathcal{D}(\mathbb{D}^n)$. Occasionally, it will be very useful for us to view for example the Dirichlet space of the bidisc as a Dirichlet space-valued one-variable Dirichlet space,
$$\mathcal{D}(\mathbb{D}^2) = \mathcal{D}(\mathbb{D}, \mathcal{D}(\mathbb{D})).$$
This is the reason that we consider the vector-valued setting.

Through iterated Poisson extension, any $f \in \mathcal{D}(\mathbb{D}^n, \mathcal{H})$ defines an $\mathcal{H}$-valued holomorphic function in $z = (r_1 e^{i\theta_1}, \ldots, r_n e^{i\theta_n}) \in \mathbb{D}^n$, 
$$f(z) = f_r(\theta) =  \sum_{\alpha \in \mathbb{N}^n} a_\alpha r^\alpha e^{i(\alpha, \theta)}, \quad r \in [0,1)^n, \; \theta \in [0, 2\pi)^n.$$
We will freely identify $[0, 2\pi)^n$ with the $n$-torus $\mathbb{T}^n$.

For a positive measurable function $f$ on $\mathbb{T}^n$, let
$$Bf(\theta) = \int_{\mathbb{T}^n} \frac{1}{|e^{i\theta_1} - e^{i\psi_1}|^{\frac{1}{2}}} \cdots \frac{1}{|e^{i\theta_n} - e^{i\psi_n}|^{\frac{1}{2}}}  f(\psi) \, d\psi,$$
where $d\psi$ denotes the normalized Lebesgue measure on $\mathbb{T}^n$. For a set $E \subset \mathbb{T}^n$ in the $n$-torus, we then define the following outer capacity:
\begin{equation} \label{eq:outercap}
C(E) = \inf \left \{ \|f\|^2_{L^2(\mathbb{T}^n)} \, : \, f \geq 0, \; Bf(\theta) \geq 1 \textrm{ for all }  \theta \in E \right \}.
\end{equation}
When $n=1$ and $E$ is a Borel set (or more generally a capacitable set, see Section~\ref{sec:prelim}), $C(E)$ is equivalent to the usual (gently modified) logarithmic capacity of $E$. For $n \geq 2$, $C(E)$ is a multi-parameter analogue of logarithmic capacity. The capacity $C(\cdot)$ fits the general theory of \cite[Ch. 2.3--2.5]{Hedberg}, allowing us to access certain basic tools of potential theory such as equilibrium measures. However, we warn the reader that a number of familiar properties from the one-parameter setting do not hold. Notably, the associated $n$-logarithmic potentials defined in Section~\ref{sec:prelim} generally fail to satisfy any kind of boundedness principle \cite{AMPS}.

We shall actually prove convergence in a stronger sense than that given by \eqref{eq:introrectsum}. We say that the series $f(\theta)$ \textit{converges in the sense of Pringsheim} if it converges with respect to unrestricted rectangular partial sums,
\begin{equation} \label{eq:pring1}
f(\theta) = \lim_{\mathbb{N}^n \ni N \to \infty} \sum_{\alpha_1= 0}^{N_1} \cdots \sum_{\alpha_n= 0}^{N_n} a_\alpha e^{i(\alpha, \theta)},\end{equation}
and it holds that
\begin{equation} \sup_{N \in \mathbb{N}^n} \left\|\sum_{\alpha_1= 0}^{N_1} \cdots \sum_{\alpha_n= 0}^{N_n} a_\alpha e^{i(\alpha, \theta)}\right\|_{\mathcal{H}} < \infty.\end{equation}

Finally, we say that a property holds quasi-everywhere if it holds everywhere on $\mathbb{T}^n$ but for a set of capacity 0. Our first main result is the following.
\begin{theorem} \label{thm:rectconv}
If $f \in \mathcal{D}(\mathbb{D}^n, \mathcal{H})$, then for quasi-every $\theta \in [0,2\pi)^n$, $f(\theta)$ converges in the sense of Pringsheim.
\end{theorem}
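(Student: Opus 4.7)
My approach mirrors Beurling's $n=1$ argument, pairing a pointwise bound of the rectangular maximal function by a dyadic square function with a capacity weak-type inequality and polynomial approximation. I would first decompose $f \in \mathcal{D}(\mathbb{D}^n, \mathcal{H})$ into Littlewood--Paley blocks $\Delta_j f$, $j \in \mathbb{N}^n$, where $\Delta_j f$ collects the Fourier modes with $\alpha_k \in [2^{j_k-1}, 2^{j_k})$. The Dirichlet norm factors as $\|f\|_{\mathcal{D}}^2 \asymp \sum_{j \in \mathbb{N}^n} 2^{|j|} \|\Delta_j f\|_{L^2(\mathbb{T}^n,\mathcal{H})}^2$, with $|j| := j_1 + \cdots + j_n$. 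Any rectangular partial sum $S_N f(\theta)$ is the aggregate of the blocks $\Delta_j f(\theta)$ with $2^{j_k} \leq N_k$, so Cauchy--Schwarz against $\sum_{j \in \mathbb{N}^n} 2^{-|j|} = 2^n$ yields the pointwise bound
\[
\sup_{N \in \mathbb{N}^n} \|S_N f(\theta)\|_\mathcal{H} \leq C_n\, Gf(\theta), \qquad Gf(\theta) := \Bigl(\sum_{j \in \mathbb{N}^n} 2^{|j|} \|\Delta_j f(\theta)\|_\mathcal{H}^2\Bigr)^{1/2}.
\]

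The heart of the argument is the weak-type capacity inequality $C(\{\theta : Gf(\theta) > \lambda\}) \leq C_n' \|f\|_{\mathcal{D}}^2/\lambda^2$. The natural route uses the isomorphism $\mathcal{D}(\mathbb{D}^n, \mathcal{H}) = B(L^2(\mathbb{T}^n,\mathcal{H}))$ at the Fourier level: the factorization $f = Bg$ holds with $\|g\|_{L^2}^2 \asymp \|f\|_\mathcal{D}^2$, since $B$ acts as the multiplier $\prod_k(\alpha_k+1)^{-1/2}$ on the analytic Fourier support. Because $B$ and the rectangular partial-sum operators are simultaneously diagonalized, they commute, yielding $\|S_N f(\theta)\|_\mathcal{H} \leq B(\|S_N g\|_\mathcal{H})(\theta)$. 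One would then dominate $\sup_N \|S_N g\|_\mathcal{H}$ by a single $L^2_+$ density $h$, via a vector-valued multi-parameter Carleson-type estimate, so that $Gf(\theta) \leq Bh(\theta)$ pointwise with $\|h\|_{L^2} \lesssim \|f\|_\mathcal{D}$; the definition \eqref{eq:outercap} of $C$ then immediately delivers the weak-type bound.

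With this in hand, polynomial approximation closes the proof. Trigonometric polynomials are dense in $\mathcal{D}(\mathbb{D}^n, \mathcal{H})$ and have trivially Pringsheim-convergent Fourier series; writing $f = p + h$ with $p$ a polynomial and $\|h\|_{\mathcal{D}} < \varepsilon$, the weak-type estimate for $Gh$ shows that the rectangular partial sums of $f$ oscillate by more than any $\delta > 0$ only on a set of capacity $\lesssim \varepsilon^2/\delta^2$. Letting $\varepsilon \downarrow 0$ produces a quasi-everywhere Cauchy sequence, and the uniform bound $\sup_N \|S_N f(\theta)\|_\mathcal{H} < \infty$ comes for free from the same square-function estimate applied to $f$ itself.

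\textbf{Main obstacle.} The weak-type capacity inequality is the substantive step. The clean reduction via $f = Bg$ transfers the question to uniform-in-$\theta$ control of $S_N g$, but multi-parameter rectangular Carleson-type maximal estimates are notoriously delicate and can fail outside $L^2$ (cf.\ Fefferman's example). A successful argument will likely either exploit the polydisc (positive-frequency) restriction together with the smoothing effect of $B$, or bypass the Carleson operator entirely by invoking a Carleson-measure characterization of $C$ (analogous to \cite{AMPS} for $n=2$) to transfer the $L^2$ bound on $Gf$ into a capacity bound. The failure of the boundedness principle for $C$, emphasized in the introduction, rules out the cleanest classical potential-theoretic path and makes this reduction the genuine heart of the proof.
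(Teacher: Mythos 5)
Your proposal has a genuine gap at the very first step, and it recurs in the weak-type estimate that you yourself flag as the heart of the matter. The claim that a rectangular partial sum $S_N f$ ``is the aggregate of the blocks $\Delta_j f$ with $2^{j_k}\leq N_k$'' is false unless each $N_k$ is a power of $2$: for generic $N$, the topmost block in each variable is cut, and the Cauchy--Schwarz step controls only the contribution of the completed blocks. Thus the pointwise bound
\[
\sup_{N\in\mathbb{N}^n}\|S_N f(\theta)\|_{\mathcal{H}} \leq C_n\, Gf(\theta)
\]
does not follow. Controlling the truncated block requires a maximal estimate for partial sums within a Littlewood--Paley block, which is a Carleson-operator-type problem; as you note (citing Fefferman's example), the multi-parameter rectangular maximal partial sum operator is precisely the object one wants to avoid. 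Your second paragraph then proposes reducing to a ``vector-valued multi-parameter Carleson-type estimate'' or a Carleson-measure characterization of $C$ (as in \cite{AMPS}), but these are not proved in the proposal, and the paper's own remark after Theorem~\ref{thm:beurling} points out that the Carleson-measure characterization is substantially harder than what is actually needed here, and is only known for $n=2$.

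The paper takes an essentially different, and much more economical, route. It first establishes (Theorem~\ref{thm:beurling}) that the radial variation $V_n f$ is finite quasi-everywhere, via a direct potential-theoretic argument: a measure of finite energy on the putative exceptional set would force $\int V_n f\,d\mu<\infty$. Corollary~\ref{cor:conv} then gives quasi-everywhere existence and uniform boundedness of the iterated Abel means $P_{N_1,\theta_1}\otimes\cdots\otimes P_{N_n,\theta_n}f$. The key replacement for your square-function step is Lemma~\ref{lem:fejer}: the operator $S_{N,\theta}^{\mathcal{H}}-P_{N,\theta}^{\mathcal{H}}$ has norm $\lesssim\|f\|_{\mathcal{D}(\mathbb{D},\mathcal{H})}$ uniformly in $N$ and $\theta$, and tends to zero uniformly as $N\to\infty$. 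Crucially, this is a \emph{uniform-in-$\theta$} estimate in terms of the Dirichlet norm, not a pointwise square-function bound, and it automatically absorbs the partial-block contribution that breaks your decomposition. The passage from $n=1$ to general $n$ is by a tensor-product telescoping
\[
S_{N_1}\otimes S_{N_2}-P_{N_1}\otimes P_{N_2}
=(S_{N_1}-P_{N_1})\otimes S_{N_2}+P_{N_1}\otimes(S_{N_2}-P_{N_2}),
\]
combined with the identification $\mathcal{D}(\mathbb{D}^n,\mathcal{H})=\mathcal{D}(\mathbb{D}^{n-1},\mathcal{D}(\mathbb{D},\mathcal{H}))$, so that the $n=1$ case applied in the Dirichlet-space-valued setting produces the exceptional sets in each variable. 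This entirely bypasses the maximal-function difficulty, at the price of going through Theorem~\ref{thm:beurling} first. To repair your approach you would either need to prove the weak-type capacity inequality for the genuine maximal operator (a substantial open-ended task) or redesign the decomposition so that the cut-block contribution is handled by a uniform Dirichlet-norm bound, which is precisely what the paper's Fej\'er lemma does.
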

Our second main theorem shows that Theorem~\ref{thm:rectconv} is sharp.
\begin{theorem} \label{thm:rectsharp}
If $E \subset \mathbb{T}^n$ is compact and $C(E)=0$, then there exists a function $f \in \mathcal{D}(\mathbb{D}^n)$ such that $f(\theta)$ diverges in the sense of Pringsheim for $\theta \in E$.
\end{theorem}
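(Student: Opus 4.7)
The plan is to use the vanishing of $C(E)$ to produce a nonnegative $L^2$ function $g$ whose multi-parametric potential $Bg$ is infinite on $E$, convert $g$ into an analytic Dirichlet function on $\mathbb{D}^n$, and then pass from radial blow-up to Pringsheim divergence via the multi-parameter Abel theorem.

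Since $E$ is compact and $C(E)=0$, the outer capacity definition \eqref{eq:outercap} provides, for every $k \geq 1$, a nonnegative $f_k\in L^2(\mathbb{T}^n)$ with $\|f_k\|_{L^2}^2 \leq 2^{-k}$ and $Bf_k(\theta) \geq 1$ for every $\theta \in E$. Setting $g := \sum_k f_k$ yields $g \in L^2(\mathbb{T}^n)$ with $Bg(\theta) = \sum_k Bf_k(\theta) = +\infty$ on $E$. I then associate to $g$ the analytic function
\[F(z) \;=\; \sum_{\alpha\in\mathbb{N}^n}\widehat g(\alpha)\prod_{j=1}^n(\alpha_j+1)^{-1/2} z^\alpha, \qquad z\in\mathbb{D}^n,\]
which satisfies $\|F\|_{\mathcal{D}(\mathbb{D}^n)}^2 = \sum_{\alpha\in\mathbb{N}^n}|\widehat g(\alpha)|^2 \leq \|g\|_{L^2}^2 < \infty$. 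For each sign vector $\epsilon \in \{\pm 1\}^n$ I also build the reflected function $F^{(\epsilon)}(z) = \sum_{\alpha\in\mathbb{N}^n}\widehat g(\epsilon\alpha)\prod(\alpha_j+1)^{-1/2} z^\alpha$, likewise of Dirichlet norm at most $\|g\|_{L^2}$, and form the aggregate $\widetilde F = \sum_\epsilon F^{(\epsilon)} \in \mathcal{D}(\mathbb{D}^n)$.

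The crux is then to show that $\limsup_{r\to(1^-,\ldots,1^-)}|\widetilde F(r,\theta)| = +\infty$ for every $\theta \in E$. Radially, $\widetilde F(r,\theta)$ is a combination of convolutions of $g$ against partial analytic half-kernels built from the one-variable kernel $K_{1,r}^+(\eta) = \sum_{k\geq 0}(k+1)^{-1/2} r^k e^{ik\eta}$; each one-variable factor is comparable in modulus to $|1-re^{i\eta}|^{-1/2}$ and has coherent complex phase within each half of $\mathbb{T}$. The $2^n$-fold reflection is arranged so that every orthant of the integration variable $\psi - \theta$ is matched with a reflection $\epsilon$ producing constructive interference, so that $|\widetilde F(r,\theta)|$ dominates a positive multiple of $B_r g(\theta)$ along some sequence $r \to (1^-,\ldots,1^-)$. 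Since $B_r g(\theta) \uparrow Bg(\theta) = +\infty$ on $E$, the desired radial blow-up follows.

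Finally, if the Fourier series of $\widetilde F$ converged in the Pringsheim sense at some $\theta \in E$, then, by the paper's definition, the rectangular partial sums would be uniformly bounded, and the multi-parameter Abel theorem would force the unrestricted Abel means $\widetilde F(r,\theta)$ to converge to a finite limit as $r\to(1^-,\ldots,1^-)$, contradicting the radial blow-up just established. Hence $\widetilde F$ diverges in the Pringsheim sense on $E$. The main obstacle is the phase analysis in the previous paragraph: for $n \geq 2$ the complex half-kernel can produce destructive interference between different orthants of $\psi-\theta$, so a single analytic potential need not blow up even when $Bg$ does, and the $2^n$-fold reflection together with a careful choice of boundary path is needed to restore constructive interference and extract the divergence from the Taylor phases of the $(1-w)^{-1/2}$-type factors.
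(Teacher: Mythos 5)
Your overall skeleton matches the paper's: reduce Theorem~\ref{thm:rectsharp} to producing an $f \in \mathcal{D}(\mathbb{D}^n)$ whose (real part of) Abel means blow up on $E$, then use the multi-parameter Abel theorem (that Pringsheim convergence forces bounded, convergent unrestricted Abel means) to convert blow-up into Pringsheim divergence. The paper carries this out by first proving Theorem~\ref{thm:sharp}, and your final paragraph is essentially the same deduction. So the disagreement is confined to the construction of the blowing-up function.

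There, however, you have a genuine gap, and you are right to flag it yourself. Working directly with the half-order kernel, the phase of $\prod_j (1-r_je^{i\eta_j})^{-1/2}$ ranges over an interval of size $n\pi/2$, and for $n\geq 2$ the real part of the product genuinely changes sign. Your $2^n$ reflections do not repair this: if $F^{(\epsilon)}(z)=\sum_{\alpha\in\mathbb{N}^n}\widehat g(\epsilon\alpha)c_\alpha z^\alpha$, then to turn $\sum_\epsilon F^{(\epsilon)}$ into a convolution of $g$ against the positive kernel $\prod_j\mathrm{Re}\,(1-r_je^{i\eta_j})^{-1/2}$ you would need to evaluate each $F^{(\epsilon)}$ at the \emph{reflected} point $(r_je^{i\epsilon_j\theta_j})_j$ rather than at $re^{i\theta}$; the sum of those is not an analytic function of $z\in\mathbb{D}^n$ and hence is not a single element of $\mathcal{D}(\mathbb{D}^n)$ whose Pringsheim sum you can speak of. Evaluating them all at the same point, as you wrote, destroys the orthant-by-orthant cancellation you were hoping for. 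Taking $\mathrm{Re}\,F$ symmetrizes under $\alpha\mapsto-\alpha$ (all coordinates at once), which again is not orthant-wise reflection when $n\geq 2$, so it does not produce the positive kernel either.

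The paper's resolution is not a phase-interference argument but a change of kernel. Instead of the $(1-z)^{-1/2}$ kernel of the potential $B$, it works with the $n$-logarithmic kernel $H$ and its complexification $G(\psi)=\prod_j(C+\log\frac{1}{1-e^{-i\psi_j}})$. Because $|\mathrm{Im}\log\frac{1}{1-z}|\leq\pi/2$ is bounded while $\mathrm{Re}\log\frac{1}{1-z}$ can be made uniformly large by choosing $C$ big, one obtains $\mathrm{Re}\,G(\psi)\approx H(\psi)$ for a fixed $C=C(n)$; this replaces the constructive-interference claim you could not establish. That trick is unavailable for the half-order kernel, where the imaginary part is not bounded relative to the real part and cannot be quenched by adding a constant. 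Moreover, instead of a generic $L^2$ function with infinite $B$-potential, the paper uses equilibrium measures $\mu_{F_j}$ of shrinking compact neighborhoods of $E$: this gives uniform lower bounds $H\mu_{F_j}\geq 1$ quasi-everywhere on $F_j$, which feed into a Poisson-representation argument (Lemma~\ref{lem:poissonrep}) showing the boundary density of $\mathrm{Re}\,f_j$ is $\geq c$ on an open neighborhood of $E$. This last step matters because localization fails for multiple Poisson integrals (see the remark after Lemma~\ref{lem:poissonrep}); the unrestricted $\liminf$ is extracted from the indicator of an open set, not from a monotone-convergence statement like $B_rg\uparrow Bg$, which is not available here. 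In short: your Step~4 is where the proof must live, and the paper settles it by replacing the kernel and the raw $L^2$ function with the logarithmic kernel and equilibrium measures, not by phase matching.
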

To prove Theorems~\ref{thm:rectconv} and \ref{thm:rectsharp}, we will first prove that multi-parametric logarithmic capacity characterizes the exceptional sets for the radial variation $V_n f (\theta)$ of $f \in \mathcal{D}(\mathbb{D}^n, \mathcal{H})$,
$$V_n f (\theta) = \int_{[0,1]^n} \| \partial_{r} f_r(\theta) \|_{\mathcal{H}} \, dr, $$
where $\partial_r = \partial_{r_1} \cdots \partial_{r_n}$ and $dr = dr_1 \cdots dr_n$. 
\begin{theorem} \label{thm:beurling}
If $f \in \mathcal{D}(\mathbb{D}^n, \mathcal{H})$, then $V_n f(\theta)$ is finite for quasi-every $\theta$.
\end{theorem}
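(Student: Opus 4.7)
The plan is to establish the pointwise bound $V_n f(\theta) \leq C \cdot Bg(\theta)$ for all $\theta \in \mathbb{T}^n$, with $g \geq 0$ in $L^2(\mathbb{T}^n)$ and $\|g\|_{L^2} \leq C\|f\|_{\mathcal{D}(\mathbb{D}^n,\mathcal{H})}$. The theorem then follows from the standard capacity-theoretic fact (\cite[Ch.~2.3--2.5]{Hedberg}) that $Bg < \infty$ quasi-everywhere for every nonnegative $g \in L^2(\mathbb{T}^n)$, which itself is a consequence of a weak-type estimate $C(\{Bg > \lambda\}) \lesssim \|g\|_{L^2}^2/\lambda^2$.

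To construct $g$, the first step is to decompose the radial cube dyadically. Set $I^K = \prod_{j=1}^n [1-2^{-k_j},\,1-2^{-k_j-1}]$ for $K = (k_1,\dots,k_n) \in \mathbb{N}^n$; then Cauchy--Schwarz in $r$ on each box yields
$$\int_{I^K} \|\partial_r f_r(\theta)\|_\mathcal{H}\,dr \;\leq\; 2^{-|K|/2}\,H_K(\theta), \qquad H_K(\theta) := \Bigl(\int_{I^K} \|\partial_r f_r(\theta)\|^2_\mathcal{H}\,dr\Bigr)^{1/2}.$$
For $r \in I^K$, the weights $r^{\alpha-1}$ satisfy $\prod_j r_j^{\alpha_j-1} \asymp \prod_j e^{-\alpha_j/2^{k_j}}$, which localizes the Fourier support of $\partial_r f_r$ to multi-indices $\alpha_j \lesssim 2^{k_j}$. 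Writing $\Delta_K f$ for the Fourier projection onto the dyadic block $\{\alpha : \alpha_j \in [2^{k_j-1},\,2^{k_j})\}$, a direct Plancherel computation then gives the pointwise estimate $H_K(\theta) \lesssim 2^{|K|/2}\,\|\Delta_K f(\theta)\|_\mathcal{H}$, and hence $V_n f(\theta) \lesssim \sum_K \|\Delta_K f(\theta)\|_\mathcal{H}$.

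The second step exploits that the Fourier multiplier $\widehat{K}(\alpha) \asymp \prod_j(\alpha_j+1)^{-1/2}$ of the operator $B$ satisfies $\widehat{K}(\alpha) \asymp 2^{-|K|/2}$ on the block $K$, so that $B$ is essentially invertible on this block by multiplication by $2^{|K|/2}$. Combined with the pointwise bound $|Bh| \leq B|h|$ from the positivity of the kernel, this yields
$$\|\Delta_K f(\theta)\|_\mathcal{H} \;\lesssim\; 2^{|K|/2}\,B\bigl(\|\Delta_K f\|_\mathcal{H}\bigr)(\theta).$$
Summing and linearity of $B$ then gives $V_n f(\theta) \lesssim Bg(\theta)$ with $g := \sum_K 2^{|K|/2}\,\|\Delta_K f\|_\mathcal{H}$.

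The main obstacle is proving that $\|g\|_{L^2(\mathbb{T}^n)} \lesssim \|f\|_{\mathcal{D}(\mathbb{D}^n,\mathcal{H})}$. The difficulty is that taking the $\mathcal{H}$-norm pointwise destroys the Fourier orthogonality of the $\Delta_K f$, so the naive triangle inequality $\|g\|_{L^2} \leq \sum_K 2^{|K|/2}\|\Delta_K f\|_{L^2}$ diverges for a generic Dirichlet function. One must instead expand $\|g\|^2_{L^2}$ directly: the diagonal $\sum_K 2^{|K|}\|\Delta_K f\|^2_{L^2}$ matches $\|f\|^2_{\mathcal{D}}$ by the Littlewood--Paley identity, and the off-diagonal cross terms $\int_{\mathbb{T}^n}\|\Delta_K f\|_\mathcal{H}\|\Delta_{K'} f\|_\mathcal{H}\,d\theta$ must be controlled via a Schur-type test that exploits the (approximately) disjoint Fourier support of the blocks $\Delta_K f$. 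It is probably at this step that the vector-valued identity $\mathcal{D}(\mathbb{D}^n,\mathcal{H}) = \mathcal{D}(\mathbb{D},\mathcal{D}(\mathbb{D}^{n-1},\mathcal{H}))$ becomes essential, permitting an induction on $n$ in which the one-variable (classical Beurling) case serves as the base and each step reduces to a vector-valued one-dimensional estimate.
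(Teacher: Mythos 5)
Your overall strategy is the right one and is the same as the paper's: show $V_n f(\theta) \lesssim Bg(\theta)$ for some $0 \le g \in L^2(\mathbb{T}^n)$ with $\|g\|_{L^2} \lesssim \|f\|_{\mathcal{D}(\mathbb{D}^n,\mathcal{H})}$, and then conclude by the weak-type inequality for $C$ (or, equivalently as the paper does, by testing against a putative finite-energy measure on the exceptional set). The final step is sound.

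The gap is in the construction of $g$, and it is the one you yourself flag but do not resolve. Setting $g = \sum_K 2^{|K|/2}\|\Delta_K f\|_{\mathcal H}$, the expansion of $\|g\|_{L^2}^2$ produces the correct diagonal $\sum_K 2^{|K|}\|\Delta_K f\|_{L^2}^2 \approx \|f\|_{\mathcal D}^2$, but the proposed ``Schur-type test'' for the cross terms cannot be run on the grounds you give: once you take the pointwise $\mathcal H$-norm, the functions $\theta \mapsto \|\Delta_K f(\theta)\|_{\mathcal H}$ are nonnegative and carry no usable Fourier-support information, so there is no cancellation or off-diagonal decay in $\int \|\Delta_K f\|_{\mathcal H}\|\Delta_{K'} f\|_{\mathcal H}\,d\theta$ beyond Cauchy--Schwarz, which gives $2^{|K|/2}\|\Delta_K f\|_{L^2}\cdot 2^{|K'|/2}\|\Delta_{K'} f\|_{L^2}$ and hence $(\sum_K 2^{|K|/2}\|\Delta_K f\|_{L^2})^2$ --- generically infinite for $f \in \mathcal D$. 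The same $\ell^1$-over-$K$ defect already sits in the first step: from $V_n f \le \sum_K 2^{-|K|/2}H_K$ and $\sum_K 2^{-|K|}H_K^2 \approx \|f\|_{\mathcal D}^2$ there is no way to pass to the $\ell^1$ sum without a logarithmic loss. Your second-step estimate $\|\Delta_K f(\theta)\|_{\mathcal H} \lesssim 2^{|K|/2}B(\|\Delta_K f\|_{\mathcal H})(\theta)$ is also not justified as stated: writing $\Delta_K f = B(\Delta_K F)$ with $F = B^{-1}f$ gives $\|\Delta_K f(\theta)\|_{\mathcal H} \le B(\|\Delta_K F\|_{\mathcal H})(\theta)$, but $\|\Delta_K F(\theta)\|_{\mathcal H}$ is only comparable to $2^{|K|/2}\|\Delta_K f(\theta)\|_{\mathcal H}$ in $L^2$ norm, not pointwise, and substituting $\Delta_K F$ for $2^{|K|/2}\Delta_K f$ merely reproduces the same $\ell^1$ problem with $g = \sum_K\|\Delta_K F\|_{\mathcal H}$.

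The paper avoids the dyadic decomposition entirely, and this is the essential idea you are missing. It chooses a specific kernel $\tilde B_r(\theta)$ (Poisson extension of $\tilde B = \prod_j \tilde b(\theta_j)$ with $\tilde b(\theta)=\mre(1-e^{i\theta})^{-1/2}$) whose Fourier coefficients $C_\alpha \asymp \prod_j (\alpha_j+1)^{-1/2}$ make the exact identity $\partial_r f_r(\theta) = \int_{\mathbb{T}^n}F(\psi)\,\partial_r\tilde B_r(\theta-\psi)\,d\psi$ hold with $F = \sum_\alpha C_\alpha^{-1}a_\alpha e^{i(\alpha,\theta)}$, which is manifestly in $L^2(\mathbb T^n;\mathcal H)$ with $\|F\|_{L^2}\approx\|f\|_{\mathcal D}$. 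The crucial kernel estimate, obtained by a one-variable integral, is $\int_{[0,1]^n}|\partial_r\tilde B_r(\theta)|\,dr \lesssim \prod_j|\sin(\theta_j/2)|^{-1/2}\lesssim\tilde B(\theta)$. Fubini then yields $V_n f(\theta) \lesssim \int\|F(\psi)\|_{\mathcal H}\tilde B(\theta-\psi)\,d\psi = B(\|F\|_{\mathcal H})(\theta)$ directly, with no summation over dyadic blocks at all. This is what permits taking $g = \|F\|_{\mathcal H}$, trivially in $L^2$, and makes the proof close. Your suggested induction on $n$ via $\mathcal D(\mathbb D^n,\mathcal H)=\mathcal D(\mathbb D,\mathcal D(\mathbb D^{n-1},\mathcal H))$ is the right tool for the other theorems (rectangular convergence and strong differentiability), but it is not needed here and would not repair the $\ell^1$ defect.
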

\begin{remark}
When $n=2$ and $\mathcal{H} = \mathbb{C}$, this theorem is an immediate corollary of the work in \cite{AMPS}. In that paper, the Carleson measures for $\mathcal{D}(\mathbb{D}^2)$, which also turn out to be embedding measures for the radial variation, were given a potential-theoretic characterization. However, the characterization of Carleson measures is a much more complicated problem than the characterization of exceptional sets for the radial variation -- see \cite{HPV, MPV}.
\end{remark}
Applying Theorem~\ref{thm:beurling}, we obtain the following corollary on unrestricted iterated Abel summation, that is, on the radial limits of a function $f \in \mathcal{D}(\mathbb{D}^n, \mathcal{H})$.
\begin{corollary} \label{cor:conv}
If $f \in \mathcal{D}(\mathbb{D}^n, \mathcal{H})$, then for quasi-every $\theta$ it holds that
$$f^\ast(\theta) = \lim_{r \to (1,\cdots, 1)} f_r(\theta)$$
exists, and furthermore that
$$\sup_{r} \|f_r(\theta)\|_{\mathcal{H}} < \infty.$$
The value of $f^\ast(\theta)$ coincides with the Pringsheim sum $f(\theta)$ quasi-everywhere.
\end{corollary}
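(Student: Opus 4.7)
The plan is to obtain the existence of the unrestricted radial limit from Theorem~\ref{thm:beurling} via a multi-variable fundamental theorem of calculus, and then identify that limit with the Pringsheim sum through a multi-parameter Abel argument built on Theorem~\ref{thm:rectconv}. Iterating the one-dimensional FTC in each radial variable produces the decomposition
\begin{equation*}
f_r(\theta) = \sum_{S \subseteq \{1, \ldots, n\}} \int_{\prod_{j \in S} [0, r_j]} (\partial_{r_S} f)\bigl(r^S(s)\bigr)(\theta) \, ds_S,
\end{equation*}
where $\partial_{r_S} = \prod_{j \in S} \partial_{r_j}$, the vector $r^S(s) \in \mathbb{R}^n$ has $j$th coordinate $s_j$ for $j \in S$ and $0$ for $j \notin S$, and the $S = \emptyset$ term reduces to the constant $f_0(\theta) = a_0$. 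If, for a given $\theta$, each of the $2^n$ integrands is absolutely integrable over $[0,1]^{|S|}$, then every summand has a limit as $r \to (1, \ldots, 1)$ unrestrictedly by dominated convergence and stays uniformly bounded in $r$, which yields both the existence of $f^*(\theta)$ and the bound $\sup_r \|f_r(\theta)\|_{\mathcal{H}} < \infty$.

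For $S = \{1, \ldots, n\}$ the required absolute integrability is exactly $V_n f(\theta) < \infty$, quasi-everywhere by Theorem~\ref{thm:beurling}. For a proper subset $S \subsetneq \{1, \ldots, n\}$, the restriction $f_S$ of $f$ to the slice $\{z_j = 0 : j \notin S\}$ has Fourier expansion over the indices with $\alpha_j = 0$ for $j \notin S$, and therefore lies in $\mathcal{D}(\mathbb{D}^{|S|}, \mathcal{H})$, so Theorem~\ref{thm:beurling} in dimension $|S|$ yields the corresponding finiteness for every $\theta_S \in \mathbb{T}^{|S|}$ outside some set $E_S$ of $|S|$-capacity zero. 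To transfer this to $\mathbb{T}^n$, I would establish the lemma that $E_S \times \mathbb{T}^{|S^c|}$ has $n$-capacity zero: given admissible $g_k \geq 0$ on $\mathbb{T}^{|S|}$ with $Bg_k \geq 1$ on $E_S$ and $\|g_k\|^2_{L^2} \to 0$, the tensor-product extensions $\tilde g_k(\psi) = c \, g_k(\psi_S)$ satisfy $B \tilde g_k \geq 1$ on $E_S \times \mathbb{T}^{|S^c|}$---because integrating the one-variable kernel $|e^{i\theta} - e^{i\psi}|^{-1/2}$ over $d\psi$ produces a strictly positive continuous function of $\theta$ bounded below by some $m > 0$---while $\|\tilde g_k\|^2_{L^2(\mathbb{T}^n)} \to 0$. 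Unioning the $2^n - 1$ resulting exceptional sets delivers the required absolute integrability quasi-everywhere.

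To identify $f^*(\theta)$ with the Pringsheim sum $f(\theta)$, I would use the rectangular Abel identity
\begin{equation*}
f_r(\theta) = \prod_{j=1}^n (1 - r_j) \sum_{N \in \mathbb{N}^n} S_N(\theta) \, r^N,
\end{equation*}
where $S_N(\theta)$ denotes the rectangular partial sum of index $N$. Splitting the right-hand side according to whether $\min_j N_j$ exceeds a threshold, and exploiting $\prod_j (1 - r_j) \sum_N r^N = 1$, shows that Pringsheim convergence $S_N(\theta) \to f(\theta)$ combined with $\sup_N \|S_N(\theta)\|_{\mathcal{H}} < \infty$ forces $f_r(\theta) \to f(\theta)$ as $r \to (1, \ldots, 1)$ unrestrictedly. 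Both hypotheses hold quasi-everywhere by Theorem~\ref{thm:rectconv}, so $f^*(\theta) = f(\theta)$ quasi-everywhere. The main obstacle I anticipate is the capacity-transfer lemma from the second paragraph, although it reduces cleanly to an explicit tensor-product computation thanks to the product structure of the kernel defining $B$; the remainder is a routine combination of FTC, Theorem~\ref{thm:beurling}, and a short Tauberian estimate, with no need for any boundedness principle for the $n$-logarithmic potentials.
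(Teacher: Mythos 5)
Your proof is correct, and for the existence and boundedness of $f^\ast(\theta)$ it is essentially the paper's argument written in systematic generality: iterated FTC over the $2^n$ subsets $S \subseteq \{1,\ldots,n\}$, slices $f_S \in \mathcal{D}(\mathbb{D}^{|S|},\mathcal{H})$, Theorem~\ref{thm:beurling} applied in each dimension, and a capacity-transfer step to pull the lower-dimensional exceptional sets up to $\mathbb{T}^n$. The paper does exactly this for $n=2$ (with $F = E \cup (E_1\times\mathbb{T}) \cup (\mathbb{T}\times E_2)$), stating that higher $n$ is the same, and it handles capacity transfer via the product formula $C(E_1\times\cdots\times E_n) = C(E_1)\cdots C(E_n)$ recorded as a lemma in Section~2; your explicit tensor-extension construction rederives the only case needed (one factor of zero capacity) and is correct, since the one-variable kernel integrates to a fixed positive constant. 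Where you genuinely diverge is the identification $f^\ast = f$ quasi-everywhere: the paper establishes it \emph{inside} the proof of Theorem~\ref{thm:rectconv} by showing $(S_{N_1,\theta_1}\otimes\cdots\otimes S_{N_n,\theta_n} - P_{N_1,\theta_1}\otimes\cdots\otimes P_{N_n,\theta_n})f \to 0$ off a null-capacity set, whereas you take Theorem~\ref{thm:rectconv} as given and run the multi-parameter Abel/Tauberian argument (rectangular summation by parts and index splitting). That Tauberian direction is precisely the one the paper invokes in the proof of Theorem~\ref{thm:rectsharp}, so the ingredient is already present in the paper; using it here is logically sound (no circularity, since the paper's proof of Theorem~\ref{thm:rectconv} only uses the existence and boundedness parts of Corollary~\ref{cor:conv}, which you prove first) and makes the identification a clean downstream consequence of Theorem~\ref{thm:rectconv} rather than a by-product buried in its proof. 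The trade-off is modest: the paper's route avoids having to appeal to the Abel/Tauberian step twice, while yours gives a more modular exposition.
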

Theorem~\ref{thm:beurling} is also sharp.
\begin{theorem} \label{thm:sharp}
If $E \subset \mathbb{T}^n$ is compact and $C(E)=0$, then there exists a function $f \in \mathcal{D}(\mathbb{D}^n)$ such that
$$\lim_{z \to \zeta} \mre f(z) = \infty, \quad \zeta \in E.$$
\end{theorem}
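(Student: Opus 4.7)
The plan is to construct $f$ as a holomorphic ``multi-logarithmic'' potential of a positive measure $\mu$ on $\mathbb T^n$ associated with $E$, following the scheme that establishes sharpness of Beurling's theorem in one variable.

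I first extract a measure. Since $C(E)=0$, for each integer $k\ge 1$ I can choose $g_k\ge 0$ in $L^2(\mathbb T^n)$ with $\|g_k\|_2<2^{-k}$ and $Bg_k\ge 1$ on $E$; then $g=\sum_k 2^{k/2}g_k$ lies in $L^2(\mathbb T^n)$ and satisfies $Bg(\theta)=+\infty$ for every $\theta\in E$. Because $B$ is self-adjoint with Fourier multiplier comparable, coordinatewise, to $(|\alpha_j|+1)^{-1/2}$, the composition $B^2$ has the same multiplier as the $n$-logarithmic kernel $K(\theta,\psi):=\prod_{j=1}^n\log(e/|e^{i\theta_j}-e^{i\psi_j}|)$. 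A standard kernel comparison then produces a positive Borel measure $\mu$ supported in a neighborhood of $E$ with finite $n$-logarithmic energy $I(\mu)<\infty$ and with $n$-logarithmic potential $U^\mu(\theta):=\int K(\theta,\psi)\,d\mu(\psi)=+\infty$ for every $\theta\in E$.

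Next, I define
\begin{equation*}
f(z)=\int_{\mathbb T^n}\prod_{j=1}^n\bigl(1-\log(1-z_j\bar\zeta_j)\bigr)\,d\mu(\zeta),\qquad z\in\mathbb D^n,
\end{equation*}
using the principal branch of $\log$; the additive $1$ ensures positive real part of each factor throughout $\mathbb D^n$. Expanding $-\log(1-w)=\sum_{k\ge 1}w^k/k$ and applying Parseval in the polydisc, the Dirichlet norm admits the coordinate-subset decomposition
\begin{equation*}
\|f\|_{\mathcal D(\mathbb D^n)}^2\lesssim \sum_{S\subseteq\{1,\ldots,n\}}I_{|S|}(\mu_S)\lesssim I(\mu)<\infty,
\end{equation*}
where $\mu_S$ is the marginal of $\mu$ on the coordinates in $S$, and the last bound uses that $\log(e/|\cdot|)\ge 1$ pointwise.

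Finally, to show $\mre f(z)\to+\infty$ as $z\to\zeta^*\in E$ unrestrictedly, I write each factor as $(1+L_j)+i\phi_j$ with $L_j(z,\zeta):=\log(1/|1-z_j\bar\zeta_j|)$ and $|\phi_j(z,\zeta)|<\pi/2$; the bound on $\phi_j$ uses that $1-z_j\bar\zeta_j$ lies in the open right half-plane for $z\in\mathbb D^n$. Expanding the product,
\begin{equation*}
\mre f(z)=\int_{\mathbb T^n}\prod_{j=1}^n\bigl(1+L_j(z,\zeta)\bigr)\,d\mu(\zeta)+R(z),
\end{equation*}
where $R(z)$ collects the cross-terms, each involving at most $n-2$ factors $(1+L_j)$ with the remaining entries replaced by bounded quantities. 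Fatou's lemma gives that the leading integral tends to $U^\mu(\zeta^*)=+\infty$. The main obstacle, besides the transfer in step one from the $B$-capacity to a measure-theoretic $n$-logarithmic capacity, is arranging that the lower-dimensional logarithmic potentials dominating $R(z)$ grow strictly more slowly than the main term. This will require a careful construction of $\mu$ (for example, as a multi-scale superposition of equilibrium measures diffused across all coordinates), and it has no analog in the one-variable case---it reflects the fact that the real part of a product of holomorphic logarithms is not the product of the real parts.
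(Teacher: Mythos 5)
Your proposal correctly identifies the right kind of object to build (a holomorphic multi-logarithmic potential) and correctly identifies the central obstacle: the real part of a product of holomorphic logarithms is not the product of the real parts, so merely shifting each factor by $1$ does not make the integrand's real part positive, let alone comparable to the $n$-logarithmic kernel $H$. But you leave that obstacle open --- your paragraph on controlling the ``cross-terms'' $R(z)$ is an announcement that more work is needed, not an argument. As it stands the proof has a genuine gap precisely at the point you flag.

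The paper's route around this is a single clean observation, which is the key lemma you are missing. Define $G(\psi)=\prod_j\bigl(C+\log\frac{1}{1-e^{-i\psi_j}}\bigr)$. Because each factor's imaginary part is uniformly bounded (by $\pi/2$) while its real part can be made arbitrarily large by increasing $C$, one can choose $C=C(n)$ so large that the argument of the whole product stays below $\pi/2$; then $\mre G(\psi)\approx H(\psi)$ \emph{pointwise}, as in \eqref{eq:recomp}. This kills the cross-term problem outright: there is no remainder to estimate, because the real part of the boundary integrand is already comparable to the product kernel. With that in hand, each $\mre f_j$ is a nonnegative $n$-harmonic function, hence a Poisson integral $P\mu_j$ (Lemma~\ref{lem:poissonrep}), and the equilibrium-measure property plus Fatou shows its a.e.\ boundary values are $\geq c>0$ on the open set $G_j\supset E$; then $\varliminf_{z\to\zeta}\mre f_j(z)\geq c\,P[\chi_{G_j}](z)\to c$, and summing over $j$ gives $+\infty$. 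The constant $1$ in your formula should therefore be a sufficiently large $C(n)$.

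A secondary but real difference is in the measure. You try to extract a single $\mu$ of finite energy whose potential is $+\infty$ on $E$; your step ``a standard kernel comparison then produces a positive Borel measure $\mu$ supported in a neighborhood of $E$'' is not justified and is not how it is done in the paper. The paper instead takes a decreasing sequence of compact neighborhoods $F_j$ of $E$ with $\sum_j C(F_j)^{1/2}<\infty$, takes equilibrium measures $\mu_{F_j}$, builds one $f_j\in\mathcal D(\mathbb D^n)$ per $j$ with $\|f_j\|_{\mathcal D}^2\approx C(F_j)$, and sets $f=\sum_j f_j$. The divergence at $E$ then comes from each $f_j$ contributing a uniformly positive amount near $E$, not from a single potential being infinite. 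This sidesteps the need to produce a single measure with infinite potential exactly on $E$, which for $n\geq 2$ is delicate since the capacity is only an outer capacity and equilibrium measures live on compact sets. I would recommend adopting the paper's series-of-equilibrium-measures construction and its pointwise kernel comparison \eqref{eq:recomp}; together they replace both of the unresolved steps in your plan.
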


To complete the analogy with Beurling's work \cite{Beurling}, we shall also prove the following result on the strong differentiability of the integral of $f$.
For $\theta \in [0,2\pi)^n$ and $h \in (0,\pi)^n$, let
$$F_h(\theta) = \frac{\pi^n}{h_1\cdots h_n} \int_{(\theta_1 - h_1, \theta_1 + h_1)} \cdots \int_{(\theta_n - h_n, \theta_n + h_n)} f(\psi) \, d\psi.$$
\begin{theorem} \label{thm:strongdiff}
If $f \in \mathcal{D}(\mathbb{D}^n, \mathcal{H})$, then
$$\lim_{h \to (0,\ldots, 0)} F_h(\theta) = f(\theta)$$
for quasi-every $\theta$.
\end{theorem}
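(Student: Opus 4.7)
The plan is to reduce Theorem~\ref{thm:strongdiff} to Corollary~\ref{cor:conv} by comparing the rectangular average $F_h(\theta)$ with the iterated Poisson extension $f_{r(h)}(\theta)$, where $r(h) \in (0,1)^n$ is chosen with $r(h)_j = 1 - h_j$ (for $h_j$ small). Splitting
$$F_h(\theta) - f(\theta) = \bigl(F_h(\theta) - f_{r(h)}(\theta)\bigr) + \bigl(f_{r(h)}(\theta) - f^\ast(\theta)\bigr) + \bigl(f^\ast(\theta) - f(\theta)\bigr),$$
Corollary~\ref{cor:conv} immediately disposes of the last two summands: for quasi-every $\theta$, $f^\ast(\theta)$ agrees with the Pringsheim sum $f(\theta)$ and $f_{r(h)}(\theta) \to f^\ast(\theta)$ as $h \to 0$. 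Thus it suffices to prove that for quasi-every $\theta$, $F_h(\theta) - f_{r(h)}(\theta) \to 0$ as $h \to 0$.

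For this comparison step, expand in Fourier series to obtain
$$F_h(\theta) - f_{r(h)}(\theta) = \sum_{\alpha \in \mathbb{N}^n} a_\alpha\, e^{i(\alpha, \theta)}\, m_h(\alpha), \qquad m_h(\alpha) = \prod_{j=1}^n \frac{\sin(\alpha_j h_j)}{\alpha_j h_j} - \prod_{j=1}^n (1-h_j)^{\alpha_j}.$$
The coordinate kernels $K_{h_j}(s) = (\pi/h_j)\chi_{(-h_j,h_j)}(s)$ and $P_{1-h_j}(s)$ are both approximate identities of width comparable to $h_j$, and a direct computation gives $|m_h(\alpha)| \lesssim \prod_j \min(1, (1+\alpha_j h_j)^{-1})$. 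My strategy is to establish a capacitary weak-type inequality for the strong maximal operator
$$M^\ast f(\theta) = \sup_{h \in (0,\pi)^n}\bigl\|F_h(\theta) - f_{r(h)}(\theta)\bigr\|_{\mathcal{H}},$$
of the form $C(\{M^\ast f > \lambda\}) \lesssim \lambda^{-2}\|f\|_{\mathcal{D}(\mathbb{D}^n,\mathcal{H})}^2$, by dominating $m_h(\alpha)$ through a square function that links to the potential $B$ appearing in \eqref{eq:outercap}, in the spirit of the arguments behind Theorem~\ref{thm:beurling}. Combined with a density argument (trigonometric polynomials are norm dense in $\mathcal{D}(\mathbb{D}^n,\mathcal{H})$ and clearly satisfy $F_h p \to p$ uniformly), the maximal inequality produces the desired quasi-everywhere limit.

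The main obstacle will be the capacitary maximal inequality for $M^\ast$: the classical Jessen–Marcinkiewicz–Zygmund phenomenon shows that strong differentiation fails at the $L^1$ endpoint and genuinely requires extra regularity, while in the multi-parametric capacity setting the failure of a boundedness principle for the $n$-logarithmic potentials, noted after \eqref{eq:outercap}, rules out any naive reduction to the one-dimensional Beurling estimate. Should the direct maximal inequality prove stubborn, a back-up route is to iterate using the identification $\mathcal{D}(\mathbb{D}^n,\mathcal{H}) = \mathcal{D}(\mathbb{D}, \mathcal{D}(\mathbb{D}^{n-1},\mathcal{H}))$, reducing to a vector-valued version of the one-variable statement, taking quasi-everywhere control in the final coordinate from the induction hypothesis and absorbing the resulting error terms into the radial variation via Theorem~\ref{thm:beurling}. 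This is precisely the kind of argument for which the vector-valued setting was introduced.
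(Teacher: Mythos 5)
Your decomposition of $F_h(\theta)-f(\theta)$ into three terms is sound, and the reduction of the last two to Corollary~\ref{cor:conv} is fine. The gap is in the remaining step, $\|F_h(\theta)-f_{r(h)}(\theta)\|_{\mathcal{H}}\to 0$ quasi-everywhere, which you sketch rather than prove. Your primary route is a capacitary weak-type inequality for the strong maximal operator $M^\ast$; this is precisely the alternative that the paper explicitly mentions at the start of Section~4 and then chooses not to pursue. Given the noted failure of the boundedness principle for $n$-logarithmic potentials, it is not at all clear that such an inequality is within easy reach, and you acknowledge as much. The multiplier bound you propose, $|m_h(\alpha)|\lesssim\prod_j\min(1,(1+\alpha_jh_j)^{-1})$, is true (both $\mathrm{sinc}$ and the Poisson multipliers satisfy it), but it gives only size decay in $\alpha$; it does not tend to $0$ as $h\to 0$ for fixed $\alpha$, so it cannot by itself produce the desired limit without the square-function/maximal machinery that you have not built. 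Your back-up route is closer to the right answer but is too loosely stated: absorbing errors ``into the radial variation via Theorem~\ref{thm:beurling}'' is not what is needed.

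The paper's actual argument avoids any maximal inequality and any comparison with Abel means. It compares the rectangular-average operator $R_{h,\theta}^{\mathcal H}$ with the partial-sum operator $S_{N,\theta}^{\mathcal H}$, where $\frac{1}{N+1}\le h<\frac{1}{N}$, and shows (by the same elementary splitting as in Lemma~\ref{lem:fejer}) that $R_{h,\theta}^{\mathcal H}-S_{N,\theta}^{\mathcal H}\colon\mathcal D(\mathbb D,\mathcal H)\to\mathcal H$ is uniformly bounded in operator norm and converges pointwise to $0$ as $N\to\infty$. These two facts are exactly the input needed for the tensor decomposition from the proof of Theorem~\ref{thm:rectconv}, which then handles $n\ge 2$ by the same telescoping identity applied to $S\otimes\cdots\otimes S-R\otimes\cdots\otimes R$, using Theorem~\ref{thm:rectconv} (Pringsheim convergence), not Theorem~\ref{thm:beurling}, as the inductive input. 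If you want to rescue your approach without the maximal inequality, replace $f_{r(h)}$ by the partial sums, prove the analogue of Lemma~\ref{lem:fejer} for $R-S$, and then run the tensor argument verbatim.
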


\noindent \textbf{Acknowledgments.} The author is grateful to the anonymous referee for their suggestions, which helped to improve the exposition. This research was partially supported by EPSRC grant EP/S029486/1.

\section{Preliminaries} \label{sec:prelim}
\subsection{Multi-parametric capacity}
First, let us slightly modify the kernel of $B$ (without otherwise changing the notation). Letting
$$b(\theta) = 3 + \sum_{k=1}^\infty \frac{\cos k\theta}{k^{\frac{1}{2}}}, \quad \theta \in [0, 2\pi),$$
we note that $b(\theta) \geq 1$ is convergent and continuous for $\theta > 0$, and that 
$$b(\theta) \approx \left| \sin \frac{\theta}{2}\right|^{-\frac{1}{2}}.$$
See \cite[Ch. V.1--V.2]{ZygmundBook}.
Hence, if we let $B(\theta) = b(\theta_1) \cdots b(\theta_n)$, and for positive finite Borel measures $\mu$ on $\mathbb{T}^n$ define
$$B\mu(\theta) = \int_{\mathbb{T}^n} B(\theta - \psi) \, d\mu(\psi), \quad \theta \in [0, 2\pi)^n,$$
this only changes the definition of $C(\cdot)$ in \eqref{eq:outercap} up to constants. 

Note that the convolution of $b$ with itself satisfies that
$$h(\theta) := b*b(\theta) = 9 + \frac{1}{2}\log \frac{1}{|1-e^{i\theta}|}.$$
The kernel $H(\theta) = h(\theta_1) \cdots h(\theta_n)$ defines the $n$-logarithmic potential,
$$H\mu(\theta) =  \int_{\mathbb{T}^n} H(\theta - \psi) \, d\mu(\psi), \quad \theta \in [0, 2\pi)^n.$$
The energy of a measure $\mu$ is thus given by
$$\|B\mu\|_{L^2(\mathbb{T}^n)}^2 = \int_{\mathbb{T}^n} H\mu(\theta) \, d\mu(\theta) = \int_{\mathbb{T}^n} \int_{\mathbb{T}^n} H(\theta - \psi) \, d\mu(\psi) \, d\mu(\theta).$$

Since $B(\theta)$ is lower semi-continuous on $\mathbb{T}^n$, the theory of \cite[Ch. 2.3--2.5]{Hedberg} applies to $C(\cdot)$, as was mentioned in the introduction. In particular, every Borel set $E \subset \mathbb{T}^n$ is capacitable, that is,
$$C(E) = \inf \{ C(G) \, : \, G \supset E \textrm{ open}\} = \sup \{ C(K) \, : \, K \subset E \textrm{ compact}\}.$$
For any capacitable set $E$, $C(E)$ can be computed through the dual definition of capacity, which might give the reader a more familiar definition in the case of logarithmic capacity. More precisely,
\begin{equation} \label{eq:dualdef}
C(E)^{1/2} = \sup \left\{ \mu(E) \, : \, \supp \mu \subset E, \; \|B\mu\|_{L^2(\mathbb{T}^n)} \leq 1 \right \}.
\end{equation}
In particular, the set $E$ has capacity $0$, $C(E) = 0$, if and only if every non-zero positive finite measure $\mu$ with support in $E$ has infinite energy,
$$\int_{\mathbb{T}^n} \int_{\mathbb{T}^n} H(\theta - \psi) \, d\mu(\psi) \, d\mu(\theta) = \infty.$$
Furthermore, the following simple lemma, which we shall use without mention, is clear from \eqref{eq:outercap} and \eqref{eq:dualdef}.
\begin{lemma}
If $E_1, \ldots, E_n$ are Borel sets, then
$$C(E_1 \times \cdots \times E_n) = C(E_1)\cdots C(E_n).$$
\end{lemma}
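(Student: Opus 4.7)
The plan is to exploit the tensor product structure of the kernel, $B(\theta) = b(\theta_1) \cdots b(\theta_n)$, together with the two dual descriptions of $C(\cdot)$: the outer definition \eqref{eq:outercap} gives the easy direction $\leq$, while the dual definition \eqref{eq:dualdef} gives the matching lower bound $\geq$. Both directions amount to the same calculation: tensor products of admissible competitors for the one-variable $E_j$ produce admissible competitors for the product set $E_1 \times \cdots \times E_n$, and all relevant norms factor by Fubini.

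For the upper bound, I fix $\varepsilon > 0$ and choose, for each $j$, a nonnegative $f_j \in L^2(\mathbb{T})$ with $bf_j(\theta_j) \geq 1$ on $E_j$ and $\|f_j\|_{L^2(\mathbb{T})}^2 \leq C(E_j) + \varepsilon$ (taking $\|f_j\|_{L^2}^2$ arbitrarily small when $C(E_j)=0$). Setting $f(\psi) = f_1(\psi_1)\cdots f_n(\psi_n)$, Tonelli's theorem gives
$$Bf(\theta) = \prod_{j=1}^n bf_j(\theta_j) \geq 1 \quad \text{on } E_1 \times \cdots \times E_n,$$
and $\|f\|_{L^2(\mathbb{T}^n)}^2 = \prod_j \|f_j\|_{L^2(\mathbb{T})}^2$. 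The outer definition then gives $C(E_1 \times \cdots \times E_n) \leq \prod_j (C(E_j)+\varepsilon)$, and letting $\varepsilon \to 0$ yields the $\leq$ direction.

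For the lower bound, I use the dual definition, which applies because Borel sets (and their finite products, again Borel) are capacitable. For each $j$, pick a nonnegative finite Borel measure $\mu_j$ with $\supp\mu_j \subset E_j$ and $\|b\mu_j\|_{L^2(\mathbb{T})} \leq 1$. Form the product measure $\mu = \mu_1 \otimes \cdots \otimes \mu_n$ on $\mathbb{T}^n$. Then $\supp\mu \subset E_1 \times \cdots \times E_n$, $\mu(E_1 \times \cdots \times E_n) = \prod_j \mu_j(E_j)$, and because $B(\theta-\psi) = \prod_j b(\theta_j - \psi_j)$,
$$B\mu(\theta) = \prod_{j=1}^n b\mu_j(\theta_j), \qquad \|B\mu\|_{L^2(\mathbb{T}^n)}^2 = \prod_{j=1}^n \|b\mu_j\|_{L^2(\mathbb{T})}^2 \leq 1.$$
Thus $\mu$ is admissible in \eqref{eq:dualdef} for $E_1 \times \cdots \times E_n$, so $C(E_1 \times \cdots \times E_n)^{1/2} \geq \prod_j \mu_j(E_j)$. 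Taking the supremum over each $\mu_j$ independently yields $C(E_1 \times \cdots \times E_n)^{1/2} \geq \prod_j C(E_j)^{1/2}$, completing the equality.

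There is no real obstacle here: the only substantive points are (i) the product-kernel factorization of $B$, (ii) Tonelli to split the norms, and (iii) the observation that products of Borel sets remain Borel and therefore admit the dual characterization of capacity.
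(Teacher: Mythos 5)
Your argument is correct and is exactly the one the paper has in mind: the paper states the lemma without proof, remarking only that it is clear from \eqref{eq:outercap} and \eqref{eq:dualdef}, and your tensorization of admissible functions (for $\leq$) and of admissible measures (for $\geq$), with Tonelli factorizing the norms, is precisely that argument. The only points worth keeping explicit are the ones you already note: products of Borel sets are Borel, hence capacitable, so the dual description applies, and finiteness of the one-variable capacities (bounded by $C(\mathbb{T})<\infty$) avoids any degenerate $0\cdot\infty$ products.
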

The final piece of information that we require is the existence of equilibrium measures. For any compact set $K \subset \mathbb{T}^n$, the extremal to the capacity problem is generated by a measure $\mu_K$ such that: $\supp \mu_K \subset K$, $H\mu_K(\theta) \leq 1$ for $\theta \in \supp \mu_K$, $H\mu(\theta) \geq 1$ for quasi-every $\theta \in K$ and
$$\mu_K(K) = \int_{\mathbb{T}^n} \int_{\mathbb{T}^n} H(\theta - \psi) \, d\mu(\psi) \, d\mu(\theta) = C(K).$$
\subsection{$n$-harmonic functions}
A continuous function on $\mathbb{D}^n$ is $n$-harmonic if it is harmonic in each variable $z_j$ separately, $z = (z_1, \ldots, z_n) \in \mathbb{D}^n$.  For a finite measure $\mu$ on $\mathbb{T}^n$, we denote by $P\mu$ the $n$-harmonic function
$$P\mu(z) = P\mu(r, \theta) = \int_{\mathbb{T}^n} P_{r_1} (\theta_1 - \psi_1) \cdots P_{r_n}(\theta_n - \psi_n) \, d\mu(\psi),$$
where $z = (r_1e^{i\theta_1}, \ldots, r_n e^{i\theta_n}) \in \mathbb{D}^n$ and $P_r(\theta)$ denotes the usual Poisson kernel,
$$P_r(\theta) = \frac{1-r^2}{1-2r\cos \theta + r^2}.$$
We refer to \cite[Ch. 2]{Rudin} for the fundamentals of $n$-harmonic functions and multiple Poisson integrals. We only need to know the following, which can be extracted from Theorems 2.1.3 and 2.3.1 in \cite{Rudin}.
\begin{lemma} \label{lem:poissonrep}
If $u \geq 0$ is $n$-harmonic and non-negative on $\mathbb{D}^n$, then there exists a function $0 \leq g \in L^1(\mathbb{T}^n)$ and a singular measure $\sigma \geq 0$ on $\mathbb{T}^n$ such that
$$u(z)  = P\nu(z), \quad d\nu = g \,d\theta + d\sigma, \; z \in \mathbb{D}^n.$$
Furthermore, for almost every $\theta \in [0,2\pi)^n$, it holds that
$$\lim_{t \to 1^{-}} u(te^{i\theta_1}, \ldots te^{i\theta_n}) = g(\theta).$$
\end{lemma}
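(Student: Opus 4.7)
The plan is to reduce Theorem~\ref{thm:strongdiff} to the radial convergence supplied by Corollary~\ref{cor:conv}, and then establish a capacitary weak-type inequality controlling the discrepancy between box averages and Poisson averages. Set $r(h) = (1-h_1, \ldots, 1-h_n) \in (0,1)^n$. By Corollary~\ref{cor:conv}, $f_{r(h)}(\theta) \to f^*(\theta)$ for quasi-every $\theta$, and $f^*(\theta)$ coincides with the Pringsheim sum $f(\theta)$ quasi-everywhere. So it suffices to prove that
$$D_h f(\theta) := F_h f(\theta) - f_{r(h)}(\theta) \longrightarrow 0 \quad \text{in } \mathcal{H}$$
for quasi-every $\theta \in \mathbb{T}^n$ as $h \to (0, \ldots, 0)$.

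To prove this qe-convergence, I would establish the capacitary weak-type bound
$$C\left( \left\{ \theta : \sup_{h \in (0,\pi)^n} \|D_h f(\theta)\|_\mathcal{H} > \lambda \right\} \right) \leq \frac{c \|f\|_{\mathcal{D}(\mathbb{D}^n, \mathcal{H})}^2}{\lambda^2}, \quad \lambda > 0.$$
On the Fourier side, $D_h f = \sum_\alpha a_\alpha m_h(\alpha) e^{i(\alpha, \theta)}$ with multiplier
$$m_h(\alpha) = \prod_{j=1}^n \frac{\sin(\alpha_j h_j)}{\alpha_j h_j} - \prod_{j=1}^n (1-h_j)^{\alpha_j},$$
which is uniformly bounded ($|m_h(\alpha)| \leq 2$) and of size $O(\sum_j \alpha_j h_j)$ at low frequencies. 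Using the dual characterization \eqref{eq:dualdef}, Parseval, and Cauchy-Schwarz, one obtains for each fixed $h$
$$\left| \int_{\mathbb{T}^n}\langle D_h f(\theta), v\rangle_\mathcal{H}\, d\mu(\theta) \right| \leq c \|f\|_\mathcal{D}\, \|v\|_\mathcal{H}\, \|B\mu\|_{L^2(\mathbb{T}^n)},$$
since the Fourier coefficients of $H = b * b$ satisfy $|\widehat{H}(\alpha)| \asymp \prod_j(1+|\alpha_j|)^{-1}$, exactly matching the weight in the Dirichlet norm. To pass from a fixed $h$ to the supremum, one linearizes by measurably selecting $h(\theta) \in (0,\pi)^n$ and unit vectors $v(\theta) \in \mathcal{H}$ on the level set and then applies the single-$h$ estimate after discretization.

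With the weak-type bound in hand, the theorem follows by a standard approximation argument. Given $\epsilon > 0$, choose a trigonometric polynomial $p$ with $\|f - p\|_\mathcal{D} < \epsilon$; for continuous $p$ the convergence $D_h p(\theta) \to 0$ is uniform in $\theta$, so the bad set for $f$ differs from that for $f - p$ only by a set where $\|D_h p\|$ is eventually small. Applying the weak-type inequality to $f - p$ at threshold $\sqrt{\epsilon}$ gives an exceptional set of capacity $\lesssim \epsilon$; sending $\epsilon \to 0$ along a sequence and using countable subadditivity of $C$ gives qe-convergence. The main obstacle will be the linearization step: a naive measurable selection of $h(\theta)$ destroys the product-Fourier structure of $m_{h(\theta)}$. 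I expect it will be cleaner either to dyadically discretize the parameter set $(0,\pi)^n$ and sum the dyadic pieces, or to dominate $\|D_h f(\theta)\|_\mathcal{H}$ pointwise by a radial area integral involving $\|\partial_r f_r\|_\mathcal{H}$ via a multi-parameter fundamental theorem of calculus, thereby reducing matters to the radial variation framework already developed for Theorem~\ref{thm:beurling}.
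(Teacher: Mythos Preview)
Your proposal addresses the wrong statement. The statement in question is Lemma~\ref{lem:poissonrep}: the Poisson representation of a non-negative $n$-harmonic function together with the a.e.\ diagonal radial recovery of the absolutely continuous part of the boundary measure. What you have written is instead a sketch for Theorem~\ref{thm:strongdiff} on strong differentiability of the integral of $f\in\mathcal{D}(\mathbb{D}^n,\mathcal{H})$. Nothing in the proposal speaks to why a non-negative $n$-harmonic $u$ is the Poisson integral of a positive measure $\nu$, to the Lebesgue decomposition $d\nu = g\,d\theta + d\sigma$, or to why the diagonal radial limit recovers $g$ almost everywhere.

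For the record, the paper does not prove Lemma~\ref{lem:poissonrep} from scratch; it simply extracts the two assertions from Rudin's polydisc book (Theorems~2.1.3 and~2.3.1 there): the first gives $u = P\nu$ via weak-$\ast$ compactness of the measures $u(r\,\cdot)\,d\theta$ as $r\to 1$, and the second yields the almost-everywhere diagonal radial limit $g(\theta)$. If you wish to supply your own argument, those are the two ingredients to reproduce. As for the sketch you actually wrote (for Theorem~\ref{thm:strongdiff}): the paper takes a different and more elementary route, comparing the box-average operator $R_{h,\theta}$ directly to the partial-sum operator $S_{N,\theta}$ (with $1/(N{+}1)\le h<1/N$) via a Fej\'er-type estimate, and then reusing verbatim the tensor/induction machinery from the proof of Theorem~\ref{thm:rectconv}. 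This sidesteps entirely the linearization-of-the-supremum step you yourself identify as the main obstacle; in your outline that step is not carried out, and the ``dyadic discretization'' or ``reduce to radial variation'' fallbacks you mention are left as hopes rather than arguments.
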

\begin{remark}
Since we will prove theorems about unrestricted summation and strong differentiability, we note that unlike the one-variable setting, the proof of the lemma does not specify for which points $\theta$ the limit exists. In general, localization fails for multiple Poisson integrals. In fact, let $f^1 \in C^\infty(\mathbb{T})$ be such that $f^1(\theta_1) = 0$ for $|\theta_1| \leq \varepsilon$, for some $\varepsilon > 0$, and such that there is a sequence $t_j \to 1$ for which $P[f^1\, d\theta_1] (t_j, 0) > 0$. Let $f^2 \in \mathcal{D}(\mathbb{D})$ be any function such that $\lim_{t\to1} \mre P[f^2 \, d\theta_2](t, 0) = \infty$. Let 
$$f(\theta) = f^1(\theta_1) f^2(\theta_2) \sim \sum_{\alpha \in \mathbb{Z}^2} a_\alpha e^{i(\alpha, \theta)}.$$
Then the Fourier coefficients of $f$ satisfy that 
$$\sum_{\alpha \in \mathbb{Z}^2} (|\alpha_1|+1)(|\alpha_2|+1) |a_\alpha|^2 < \infty,$$
and $f(\theta)$ vanishes in an open neighborhood of $0$, but still 
$$\lim_{(r_1, r_2) \to (1,1)} P[f \, d\theta](r, 0) \neq 0.$$
In fact, the limit does not exist.
\end{remark}
\section{Convergence theorems}
We begin by proving Theorem~\ref{thm:beurling}. Given $f \in \mathcal{D}(\mathbb{D}^n, \mathcal{H})$, note that 
\begin{equation} \label{eq:excepset}
E = \left \{\theta \, : \, V_n f(\theta) = \infty \right \} = \bigcap_{i \geq 1} \bigcup_{j \geq 1} \left \{\theta \, : \, \int_{[0, 1-1/j]^n} \|\partial_r f_r(\theta)\|_{\mathcal{H}} \, dr > i  \right\}
\end{equation}
is a $G_\delta$-set, hence capacitable. The following proof is in the spirit of Salem and Zygmund's approach to exceptional sets for one-variable Dirichlet spaces \cite{SalemZygmund}.
\begin{proof}[Proof of Theorem~\ref{thm:beurling}] 
We may assume that the Fourier coefficients of $f$ are supported in $(\mathbb{Z}_{\geq 1})^n$, $f \sim \sum_{\alpha \in (\mathbb{Z}_{\geq 1})^n} a_\alpha e^{i(\alpha, \theta)}$.
For $k \geq 0$, let
\begin{equation} \label{eq:binomasymp}
c_k = \binom{k-1/2}{k} = \frac{1}{\sqrt{\pi}k^{1/2}} \left( 1 + O(k^{-1}) \right),
\end{equation}
so that 
$$\tilde{b}(\theta) := \sum_{k=0}^\infty c_k \cos k\theta = \mre \frac{1}{(1-e^{i\theta})^{1/2}}, \quad 0 < \theta < 2\pi,$$
see \cite[Ch. V.2]{ZygmundBook}. Note that $\tilde{b}(\theta)$ is another uniformly positive function with the same singular behavior as $b(\theta)$. Let $\tilde{h} = \tilde{b} * \tilde{b}$. Then $\tilde{h} \geq c > 0$ for some $c$, and by \eqref{eq:binomasymp} we see that $\tilde{h}(\theta)$ has the same logarithmic singularity as $h(\theta)$, when $\sin \frac{\theta}{2} \to 0$. Let $\tilde{B}(\theta) = \tilde{b}(\theta_1)\cdots \tilde{b}(\theta_n)$, $\tilde{H}(\theta) = \tilde{h}(\theta_1)\cdots \tilde{h}(\theta_n)$, and for $r \in [0,1)^n$,
$$\tilde{B}_r(\theta) = P[\tilde{B}(\psi) \, d\psi](r, \theta) =: \sum_{\alpha \in \mathbb{Z}^n} C_\alpha r_1^{|\alpha_1|} \cdots r_n^{|\alpha_n|} e^{i(\alpha, \theta)}.$$
Note that
\begin{equation} \label{eq:Bcoeff}
C_\alpha = \frac{c_{\alpha_1} \cdots c_{\alpha_n}}{2^n}, \quad \alpha \in (\mathbb{Z}_{\geq 1})^n.
\end{equation}
We will also rely on the estimate
\begin{align} \label{eq:Brest}
\int_{[0,1]^n} |\partial_r \tilde{B}_r(\theta)| \, dr &\lesssim \int_{[0,1]^n} \frac{1}{|1-r_1e^{i\theta_1}|^{3/2}} \cdots \frac{1}{|1-r_n e^{i\theta_n}|^{3/2}} \, dr \\ &\lesssim \left| \sin \frac{\theta_1}{2}\right|^{-\frac{1}{2}} \cdots \left| \sin \frac{\theta_n}{2}\right|^{-\frac{1}{2}} \lesssim \tilde{B}(\theta). \notag
\end{align}

Suppose now that the set $E$ of \eqref{eq:excepset} has positive capacity. Then there exists a non-zero finite measure $\mu$, supported in $E$, such that
$$\|\tilde{B}\mu \|_{L^2(\mathbb{T}^n)}^2 = \int_{\mathbb{T}^n} \int_{\mathbb{T}^n} \tilde{H}(\theta - \psi) \, d\mu(\psi) \, d\mu(\theta) < \infty,$$
where $\tilde{B}\mu(\theta) = \int_{\mathbb{T}^n} \tilde{B}(\theta - \psi) \, d\mu(\psi)$. Let $F$ be the $\mathcal{H}$-valued series
$$F(\theta) \sim \sum_{\alpha \in (\mathbb{Z}_{\geq 1})^n} C_\alpha^{-1} a_\alpha e^{i(\alpha, \theta)}.$$
The coefficients of $F$ are square-summable, by \eqref{eq:binomasymp}, \eqref{eq:Bcoeff}, and the fact that $f \in \mathcal{D}(\mathbb{D}^n, \mathcal{H})$. Thus $F(\theta)$ has meaning for almost every $\theta$, and 
$$\int_{\mathbb{T}^n} \|F(\theta)\|_{\mathcal{H}}^2 \, d\theta < \infty.$$
By our assumption on the support of the Fourier coefficients of $f$ we have that
$$\partial_r f_r(\theta) = \int_{\mathbb{T}^n} F(\psi) \partial_r \tilde{B}_r(\theta - \psi) \, d\psi,$$
and therefore by \eqref{eq:Brest} that
$$V_nf(\theta) \lesssim \int_{\mathbb{T}^n} \|F(\psi)\|_{\mathcal{H}} \tilde{B}(\theta - \psi) \, d\psi.$$
But then, by the assumption of finite energy,
\begin{align*}
\left(\int_{\mathbb{T}^n} V_n f(\theta) \, d\mu(\theta)\right)^2 &\lesssim \left(\int_{\mathbb{T}^n} \|F(\psi)\|_{\mathcal{H}} \tilde{B}\mu (\psi) \, d\psi\right)^2 \\ &\leq \|\tilde{B}\mu \|_{L^2(\mathbb{T}^n)}^2\int_{\mathbb{T}^n} \|F(\psi)\|_{\mathcal{H}}^2 \, d\psi < \infty.
\end{align*}
This is obviously a contradiction.
\end{proof}
\begin{proof}[Proof of Corollary~\ref{cor:conv}]
We give the proof for $n=2$. The proof is the same for $n \geq 3$, but the notation is more difficult. Given $f \in \mathcal{D}(\mathbb{D}^2, \mathcal{H})$, define $f^1, f^2 \in \mathcal{D}(\mathbb{D}, \mathcal{H})$ by 
$$f^1(z) = f(z,0), \; f^2(0,w) = f(0, w), \quad z,w \in \mathbb{D}.$$
Let
$$E = \left \{\theta \in [0,2\pi)^2 \, : \, V_2 f(\theta) = \infty \right \},$$
and
$$E_1 = \left \{\theta_1 \in [0,2\pi) \, : \, V_1 f^1(\theta_1) = \infty \right \}, \quad E_2 = \left \{\theta_2 \in [0,2\pi) \, : \, V_1 f^2(\theta_2) = \infty \right \}.$$
Let $F = E \cup (E_1 \times \mathbb{T}) \cup (\mathbb{T} \cup E_2)$. Then $C(F) = 0$, by three applications of Theorem~\ref{thm:beurling}. Suppose now that $\theta \notin F$, and for $r, r' \in [0,1)^2$, write by analyticity
\begin{multline*}
f_r(\theta) - f_{r'}(\theta) = \int_0^{r_1} \int_0^{r_2} \partial_\rho f_\rho(\theta) \, d\rho - \int_0^{r_1'} \int_0^{r_2'} \partial_\rho f_\rho(\theta) \, d\rho \\ + \int_{r_1'}^{r_1} \partial_{\rho_1} f^1_{\rho_1}(\theta_1) \, d\rho_1 + \int_{r_2'}^{r_2} \partial_{\rho_2} f^2_{\rho_2}(\theta_2) \, d\rho_2.
\end{multline*}
Thus
\begin{multline*}
\|f_r(\theta) - f_{r'}(\theta)\|_{\mathcal{H}} \leq \int_{\min(r_1, r_1')}^1 \int_0^1 \|\partial_\rho f_\rho(\theta)\|_{\mathcal{H}} \, d\rho + \int_0^1 \int_{\min(r_2, r_2')}^1 \|\partial_\rho f_\rho(\theta)\|_{\mathcal{H}} \, d\rho \\
+ \int_{\min(r_1, r_1')}^{1} \|\partial_{\rho_1} f^1_{\rho_1}(\theta_1)\|_{\mathcal{H}} \, d\rho_1 + \int_{\min(r_2, r_2')}^{1} \|\partial_{\rho_2} f^2_{\rho_2}(\theta_2)\|_{\mathcal{H}} \, d\rho_2.
\end{multline*}
Since $V_2f(\theta)$, $V_1f^1(\theta_1)$, and $V_1 f^2(\theta_2)$ are all finite, it follows that 
$$\|f_r(\theta) - f_{r'}(\theta)\|_{\mathcal{H}} \to 0, \quad r, r' \to (1,1).$$
Hence $f^\ast(\theta) = \lim_{r\to(1,1)} f_r(\theta)$ exists, for every $\theta$ outside the capacity zero set $F$. Letting $r' = 0$ in the estimate also shows that $\|f_r(\theta)\|_{\mathcal{H}}$ is uniformly bounded in $r$. 

We postpone the proof that $f^\ast(\theta)$ coincides with the sum $f(\theta)$ quasi-everywhere to the proof of Theorem~\ref{thm:rectconv}.
\end{proof}
For $n=1$ and $\mathcal{H} = \mathbb{C}$, a series $f \in \mathcal{D}(\mathbb{D})$ is summable at $\theta \in [0,2\pi)$ if and only if it is Abel summable at $\theta$. This is sometimes known as Fej\'er's Tauberian theorem. Thus, in this case Theorem~\ref{thm:beurling} immediately implies Theorem~\ref{thm:rectconv}. To prove Theorem~\ref{thm:rectconv} for $n \geq 2$, we begin by stating a vector-valued version of Fej\'er's theorem.
\begin{lemma} \label{lem:fejer}
For $N \in \mathbb{N}$ and $\theta \in [0, 2\pi)$, define $S_{N, \theta}^{\mathcal{H}}, P_{N, \theta}^{\mathcal{H}} \colon \mathcal{D}(\mathbb{D}, \mathcal{H}) \to \mathcal{H}$ by
$$S_{N, \theta}^{\mathcal{H}} f = \sum_{k=0}^N a_k e^{ik\theta}, \; P_{N, \theta}^{\mathcal{H}} f = f_{1-1/N}(\theta), \quad f \in \mathcal{D}(\mathbb{D}, \mathcal{H}).$$
Then there is an absolute constant $C > 0$ such that
$$\|S_{N, \theta}^{\mathcal{H}} f - P_{N, \theta}^{\mathcal{H}} f\|_{\mathcal{H}} \leq C \|f\|_{\mathcal{D}(\mathbb{D}, \mathcal{H})}.$$
Moreover, for every fixed $f$ we have that
$$S_{N, \theta}^{\mathcal{H}} f - P_{N, \theta}^{\mathcal{H}} f \to 0, \quad N\to\infty,$$
uniformly in $\theta$.
\end{lemma}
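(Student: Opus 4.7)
The plan is to expand the difference $S_{N,\theta}^{\mathcal{H}} f - P_{N,\theta}^{\mathcal{H}} f$ into two pieces and estimate each by Cauchy--Schwarz against the Dirichlet norm. Writing $r = 1 - 1/N$, we have
\begin{equation*}
S_{N,\theta}^{\mathcal{H}} f - P_{N,\theta}^{\mathcal{H}} f = \sum_{k=0}^{N} a_k (1 - r^k) e^{ik\theta} - \sum_{k=N+1}^\infty a_k r^k e^{ik\theta},
\end{equation*}
so it suffices to bound the two sums independently, and crucially the bounds we aim for are independent of $\theta$.

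For the operator norm inequality, I would use $|1-r^k| \leq k(1-r) = k/N$ in the first sum and pair $\|a_k\|_{\mathcal{H}} \cdot k/N = (k\|a_k\|_{\mathcal{H}}^2)^{1/2} \cdot (k/N^2)^{1/2}$ in Cauchy--Schwarz, noting that $\sum_{k=1}^N k/N^2 \leq 1$. For the tail sum, I would similarly pair $\|a_k\|_{\mathcal{H}} r^k = (k\|a_k\|_{\mathcal{H}}^2)^{1/2} \cdot (r^{2k}/k)^{1/2}$ and exploit that $1-r^2 \geq 1/N$, which gives $\sum_{k>N} r^{2k}/k \leq (N+1)^{-1} \sum_{k>N} r^{2k} \leq 1$. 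Each piece is then controlled by an absolute constant times $\|f\|_{\mathcal{D}(\mathbb{D},\mathcal{H})}$, proving the first assertion.

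For the uniform convergence statement I would refine the estimates by splitting at a carefully chosen index. Fix $\varepsilon > 0$ and choose $M$ with $\sum_{k>M} k\|a_k\|_{\mathcal{H}}^2 < \varepsilon^2$; then for $N > M$ split the first sum as $\sum_{k \leq M} + \sum_{M<k \leq N}$. The low-frequency part is at most $(M/N) \sum_{k \leq M} \|a_k\|_{\mathcal{H}}$, which tends to $0$ in $N$ for fixed $M$, while the high-frequency part is bounded by $\varepsilon$ via the same Cauchy--Schwarz estimate as above. The tail sum is handled analogously, using dominated convergence to send $\sum_{k>N} k\|a_k\|_{\mathcal{H}}^2 \to 0$. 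Since every bound is in terms of $\|a_k\|_{\mathcal{H}}$ and $r$, everything is uniform in $\theta$.

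Alternatively, once the operator norm bound is established, one could conclude by the standard three-$\varepsilon$ density argument: trigonometric polynomials are dense in $\mathcal{D}(\mathbb{D},\mathcal{H})$, and for any polynomial $p = \sum_{k=0}^M a_k e^{ik\theta}$ with $N \geq M$, the difference equals $\sum_{k=0}^{M} a_k (1 - r^k) e^{ik\theta}$, which is bounded in norm by $(M/N)\sum_{k=0}^M \|a_k\|_{\mathcal{H}}$, clearly tending to zero uniformly. The main technical care lies in making sure all constants in the splitting argument are absolute and independent of $\theta$, which is automatic here since $|e^{ik\theta}| = 1$.
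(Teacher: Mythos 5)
Your proposal is correct and follows essentially the same route as the paper's own proof: the same decomposition of the difference into $\sum_{k\leq N} a_k(1-r^k)e^{ik\theta}$ and the tail $\sum_{k>N} a_k r^k e^{ik\theta}$, the same use of $1-r^k \leq k/N$ and Cauchy--Schwarz against $\sum k\|a_k\|^2_{\mathcal{H}}$, and the same splitting at an auxiliary index $M$ to get the pointwise convergence. The algebraic bookkeeping differs cosmetically (you place Cauchy--Schwarz inside the sum and absorb $1/N$ into the weight, whereas the paper keeps $1/N$ outside), but the substance is identical, and your alternative density argument is a standard, valid shortcut for the second assertion once the uniform operator bound is in hand.
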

\begin{proof}
Let $r = 1-1/N$, and note that $1-r^k \leq k/N$, to see that
$$\|S_{N, \theta}^{\mathcal{H}} f - P_{N, \theta}^{\mathcal{H}} f\|_{\mathcal{H}} \leq \frac{1}{N}\sum_{k=1}^N k\|a_k\|_{\mathcal{H}} + \sum_{k=N}^\infty \|a_k\|_{\mathcal{H}} r^k.$$
For $M \leq N$, we estimate
$$\frac{1}{N}\sum_{k=1}^N k\|a_k\|_{\mathcal{H}} \leq \frac{1}{N}\sum_{k=1}^M k\|a_k\|_{\mathcal{H}} + \frac{1}{N}\left( \sum_{k=M}^N k\|a_k\|_{\mathcal{H}}^2 \right)^{1/2}\left( \sum_{k=M}^N k\right)^{1/2}.$$
By first choosing $M$ large, and then $N$, we see that $\frac{1}{N}\sum_{k=1}^N k\|a_k\|_{\mathcal{H}} \to 0$ as $N \to \infty$. For the second term we have that
$$
\sum_{k=N}^\infty \|a_k\|_{\mathcal{H}} r^k \leq \frac{1}{\sqrt{N}}\left( \sum_{k=N}^\infty k \|a_k\|_{\mathcal{H}}^2 \right)^{1/2}\left( \sum_{k=N}^\infty r^{2k}\right)^{1/2},
$$
and thus this term also tends to $0$ as $N \to \infty$. This second estimate, together with the first estimate for $M=0$, also shows the uniform bound of the operator norm of $S_{N, \theta}^{\mathcal{H}} - P_{N, \theta}^{\mathcal{H}}$.
\end{proof}
In the proof of Theorem~\ref{thm:rectconv} we will consider tensors of the operators $S_{N, \theta}$ and $P_{N, \theta}$, interpreted in the obvious way. For instance, if $N \in \mathbb{N}^n$, $\theta \in [0,2\pi)^n$, and $f \in \mathcal{D}(\mathbb{D}^n, \mathcal{H})$, then
$$(S_{N_1, \theta_1} \otimes \cdots \otimes S_{N_n, \theta_n}) f =  \sum_{\alpha_1= 0}^{N_1} \cdots \sum_{\alpha_n= 0}^{N_n} a_\alpha e^{i(\alpha, \theta)},$$
and
\begin{align*}
(P_{N_1, \theta_1} \otimes \cdots \otimes P_{N_n, \theta_n}) f &= f_{(1-1/N_1, \ldots, 1-1/N_n)}(\theta) \\ &=  \sum_{\alpha_1= 0}^{\infty} \cdots \sum_{\alpha_n= 0}^{\infty} a_\alpha (1-1/N_1)^{\alpha_1} \cdots (1-1/N_n)^{\alpha_n}e^{i(\alpha, \theta)}.
\end{align*}
Similarly, we consider mixed tensor products, such as
$$(S_{N_1, \theta_1} \otimes P_{N_2, \theta_2})f = \sum_{\alpha_1= 0}^{N_1} \sum_{\alpha_2= 0}^{\infty} a_{\alpha_1, \alpha_2} (1-1/N_2)^{\alpha_2} e^{i(\alpha, \theta)}.$$

\begin{proof}[Proof of Theorem~\ref{thm:rectconv}]
We will deduce the result from Theorem~\ref{thm:beurling}, Lemma~\ref{lem:fejer}, and an inductive procedure which exploits the fact that $$\mathcal{D}(\mathbb{D}^{n}, \mathcal{H})  = \mathcal{D}(\mathbb{D}^{n-1}, \mathcal{D}(\mathbb{D}, \mathcal{H})).$$
We already know that Theorem~\ref{thm:rectconv} is true for $n=1$, precisely by Theorem~\ref{thm:beurling} and Lemma~\ref{lem:fejer}.

Thus we first consider the case $n=2$. By Corollary~\ref{cor:conv}, there is a Borel set $E \subset \mathbb{T}^2$ such that $C(\mathbb{T}^2 \setminus E) = 0$, and  for every $\theta = (\theta_1, \theta_2) \in E$ we have that $(P_{N_1, \theta_1} \otimes P_{N_2, \theta_2})f$ is uniformly bounded in $N_1, N_2$ and convergent to $f^\ast(\theta)$ as $N_1, N_2 \to \infty$. To prove the theorem, it is thus sufficient to provide a set $F \subset E$ such that $C(E \setminus F) = 0$ and such that for every $\theta \in F$ it holds that
\begin{equation} \label{eq:tensor1}
\lim_{N_1,N_2 \to \infty}\|(S_{N_1, \theta_1} \otimes S_{N_2, \theta_2} - P_{N_1, \theta_1} \otimes P_{N_2, \theta_2})f\|_{\mathcal{H}} = 0,
\end{equation}
and
\begin{equation} \label{eq:tensor2}
\sup_{N_1,N_2}\|(S_{N_1, \theta_1} \otimes S_{N_2, \theta_2} - P_{N_1, \theta_1} \otimes P_{N_2, \theta_2})f\|_{\mathcal{H}} < \infty.
\end{equation}
Constructing such a set $F$ of course also proves that $f^\ast(\theta) = f(\theta)$ quasi-everywhere, as claimed in Corollary~\ref{cor:conv}.

We write
\begin{multline*}
(S_{N_1, \theta_1} \otimes S_{N_2, \theta_2} - P_{N_1, \theta_1} \otimes P_{N_2, \theta_2})f \\
= ((S_{N_1, \theta_1} - P_{N_1,\theta_1}) \otimes S_{N_2, \theta_2})f + (P_{N_1, \theta_1} \otimes (S_{N_2, \theta_2} - P_{N_2, \theta_2}))f.
\end{multline*}
Now, by the $n=1$ case of the theorem, applied to $f \in \mathcal{D}(\mathbb{D}, \mathcal{D}(\mathbb{D}, \mathcal{H}))$, there is a set $G_2 \subset \mathbb{T}$ such that $C(\mathbb{T} \setminus G_2) = 0$, and such that for every $\theta_2 \in G_2$ we have the existence of
\begin{equation} \label{eq:hdef}
h_{\theta_2} := \lim_{N_2 \to \infty} S_{N_2, \theta_2}^{\mathcal{D}(\mathbb{D}, \mathcal{H})} f \in \mathcal{D}(\mathbb{D}, \mathcal{H}).
\end{equation}
Next, for $\theta_2 \in G_2$, note that
\begin{multline*}
((S_{N_1, \theta_1} - P_{N_1,\theta_1}) \otimes S_{N_2, \theta_2})f = (S_{N_1, \theta_1}^{\mathcal{H}} - P_{N_1,\theta_1}^{\mathcal{H}})S_{N_2, \theta_2}^{\mathcal{D}(\mathbb{D}, \mathcal{H})} f \\
=(S_{N_1, \theta_1}^{\mathcal{H}} - P_{N_1,\theta_1}^{\mathcal{H}})(S_{N_2, \theta_2}^{\mathcal{D}(\mathbb{D}, \mathcal{H})} f - h_{\theta_2}) + (S_{N_1, \theta_1}^{\mathcal{H}} - P_{N_1,\theta_1}^{\mathcal{H}}) h_{\theta_2}.
\end{multline*}
Thus, by Lemma~\ref{lem:fejer} and \eqref{eq:hdef} it follows that, for any fixed $(\theta_1, \theta_2) \in \mathbb{T} \times G_2$, the term $((S_{N_1, \theta_1} - P_{N_1,\theta_1}) \otimes S_{N_2, \theta_2})f$ is uniformly bounded in $N_1, N_2$ and tends to $0$ as $N_1, N_2 \to \infty$. 

By a very similar argument (after reordering the variables $\theta_1$ and $\theta_2$), there is a set $G_1 \subset \mathbb{T}$ such that $C(\mathbb{T} \setminus G_1) = 0$, and such that for every $\theta_1 \in G_1$ and $\theta_2 \in \mathbb{T}$, the term  $(P_{N_1, \theta_1} \otimes (S_{N_2, \theta_2} - P_{N_2, \theta_2}))f$ is uniformly bounded in $N_1, N_2$ and tends to zero as $N_1, N_2 \to \infty$. Thus the proof for $n=2$ is finished by letting
$$F = E \cap (G_1 \times \mathbb{T}) \cap (\mathbb{T} \times G_2).$$
Note that in the course of the proof we have also established that $(P_{N_1, \theta_1} \otimes S_{N_2, \theta_2}) f$ is uniformly bounded in $N_1, N_2$ and converges to $f^\ast(\theta)$ as $N_1, N_2 \to \infty$, for $\theta \in F$.

For $n=3$, Corollary~\ref{cor:conv} gives us a set $E \subset \mathbb{T}^3$ such that $C(\mathbb{T}^3 \setminus E) = 0$ and on which $(P_{N_1, \theta_1} \otimes P_{N_2, \theta_2} \otimes P_{N_3, \theta_3})f$ converges and is uniformly bounded. We then write
\begin{multline*}
(S_{N_1, \theta_1} \otimes S_{N_2, \theta_2}  \otimes S_{N_3, \theta_3}  - P_{N_1, \theta_1} \otimes P_{N_2, \theta_2} \otimes P_{N_3, \theta_3})f = \\
((S_{N_1, \theta_1} - P_{N_1,\theta_1}) \otimes S_{N_2, \theta_2} \otimes S_{N_3, \theta_3})f + (P_{N_1, \theta_1} \otimes (S_{N_2, \theta_2} - P_{N_2, \theta_2} ) \otimes S_{N_3, \theta_3})f \\ +  (P_{N_1, \theta_1} \otimes  P_{N_2, \theta_2} \otimes ( S_{N_3, \theta_3} - P_{N_3, \theta_3})f.
\end{multline*}
Now we apply the $n=2$ case of the theorem, together with the remark at the end of its proof, three separate times to $f \in \mathcal{D}(\mathbb{D}^2, \mathcal{D}(\mathbb{D}, \mathcal{H}))$. Arguing with Lemma~\ref{lem:fejer} as before, this produces three sets $H_1, H_2, H_3 \subset \mathbb{T}^3$ such that $C(\mathbb{T}^3 \setminus H_j) = 0$, and such that, for $\theta \in H_j$, the $j$:th term is uniformly bounded in $N_1, N_2, N_3$ and converges  to zero as $N_1,N_2, N_3 \to\infty$. Thus $(S_{N_1, \theta_1} \otimes S_{N_2, \theta_2}  \otimes S_{N_3, \theta_3})f$ is uniformly bounded and converges as $N_1, N_2, N_3 \to \infty$, for $\theta \in E \cap H_1 \cap H_2 \cap H_3$. Furthermore, the same is true of $(P_{N_1, \theta_1} \otimes S_{N_2, \theta_2}  \otimes S_{N_3, \theta_3})f$ and $(P_{N_1, \theta_1} \otimes P_{N_2, \theta_2}  \otimes S_{N_3, \theta_3})f$.

It is now clear how the construction extends by induction to $n \geq 4$.
\end{proof}
To conclude this section, we consider Theorem~\ref{thm:strongdiff}.
One potential approach is to use a capacitary weak type inequality for the strong maximal function, or for the iterate of one-variable maximal functions. See \cite[Theorem~6.2.1]{Hedberg} for the one-parameter case. Instead of pursuing this, we will give a different argument which directly connects Theorem~\ref{thm:strongdiff} with Theorem~\ref{thm:rectconv}.
\begin{proof}[Proof of Theorem~\ref{thm:strongdiff}]
Note first that
\begin{equation} \label{eq:riemannsum}
F_h(\theta) = \sum_{\alpha \in \mathbb{N}^n} a_\alpha \frac{\sin(\alpha_1h)}{\alpha_1 h} \cdots \frac{\sin(\alpha_n h)}{\alpha_n h} e^{i(\alpha, \theta)}.
\end{equation}
This is obviously true for polynomials, and for all $f \in \mathcal{D}(\mathbb{D}^n, \mathcal{H})$ by continuity. For this last statement, note that, with continuous dependence on $f$, the values $f(\theta)$ are square-integrable on $\mathbb{T}^n$, and the right-hand side of \eqref{eq:riemannsum} is absolutely convergent.

The argument is now very similar to the proof of Theorem~\ref{thm:rectconv}. First we consider the case $n=1$, letting
$$R_{h, \theta}^\mathcal{H} f = \sum_{k= 0}^\infty a_{k}  \frac{\sin(k h)}{k h} e^{i k \theta}, \quad f \in \mathcal{D}(\mathbb{D}, \mathcal{H}),$$
for $\theta \in [0, 2\pi)$ and $h \in (0,1)$. Let $1 \leq N \in \mathbb{N}$ be such that $\frac{1}{N+1} \leq h < \frac{1}{N}$, and let $M \leq N$. Then
\begin{multline*}
\|R_{h, \theta}^\mathcal{H}f - S_{N, \theta}^\mathcal{H}f\|_\mathcal{H} \lesssim \sum_{k=1}^N \|a_k\|_{\mathcal{H}} (k h)^2  +  \sum_{k=N}^\infty \frac{\|a_k\|_{\mathcal{H}}}{k h} \leq 
\sum_{k=1}^M \|a_k\|_{\mathcal{H}} (k h)^2  \\ +  \left( \sum_{k=M}^N k \|a_k\|_{\mathcal{H}}^2 \right)^{\frac12}\left(h^4 \sum_{k=M}^N k^3 \right)^{\frac12} + \left( \sum_{k=N}^\infty k \|a_k\|_{\mathcal{H}}^2 \right)^{\frac12}\left( \frac{1}{h^2} \sum_{k=N}^\infty \frac{1}{k^3} \right)^{\frac12}.
\end{multline*}
By this estimate, $R_{h, \theta}^\mathcal{H} - S_{N, \theta}^\mathcal{H} \colon \mathcal{D}(\mathbb{D}, \mathcal{H}) \to \mathcal{H}$ is uniformly bounded in $N$ and converges pointwise to $0$ as $N \to \infty$, as long as $\frac{1}{N+1} \leq h < \frac{1}{N}$. Thus Theorem~\ref{thm:rectconv} implies Theorem~\ref{thm:strongdiff} in the case that $n=1$.

For $n \geq 2$ we proceed precisely as in the proof of Theorem~\ref{thm:rectconv}. For instance, for $n=2$ we write
\begin{multline*}
(S_{N_1, \theta_1} \otimes S_{N_2, \theta_2} - R_{h_1, \theta_1} \otimes R_{h_2, \theta_2})f \\
= ((S_{N_1, \theta_1} - R_{h_1,\theta_1}) \otimes S_{N_2, \theta_2})f + (R_{h_1, \theta_1} \otimes (S_{N_2, \theta_2} - R_{h_2, \theta_2}))f,
\end{multline*}
where $N = (N_1, N_2)$ is related to $h = (h_1, h_2)$ by the facts that $\frac{1}{N_j+1} \leq h_j < \frac{1}{N_j}$, $j=1,2$. The rest of the proof is essentially repetition.
\end{proof}
\section{Sharpness of results}
To prove Theorem~\ref{thm:sharp} in the multi-parameter setting, we adapt a one-variable construction of Carleson which is well described for example in \cite[Theorem~3.4.1]{DirichletBook}.
\begin{proof}[Proof of Theorem~\ref{thm:sharp}]
Since $C(\cdot)$ is outer and $C(E) = 0$, we may choose a sequence $G_1 \supset G_2 \supset G_3 \supset \cdots$ of open sets such that $E \subset G_j$, for all $j$, and 
$$\sum_{j=1}^\infty C(G_j)^{1/2} < \infty.$$
Since $E$ is compact, we may additionally assume that $\overline{G}_{j+1} \subset G_j$ for every $j$. Letting $F_j = \overline{G}_j$, we thus have a decreasing sequence $F_1 \supset F_2 \supset F_3 \supset \cdots$ of compact sets containing $E$, such that
\begin{equation} \label{eq:capsum}
\sum_{j=1}^\infty C(F_j)^{1/2} < \infty.
\end{equation}

Let $\mu_{F_j}$ be the equilibrium measure of $F_j$, and define $f_j \in \mathcal{D}(\mathbb{D}^n)$ by the relationship
$$f_j(z) = \int_{\mathbb{T}^n} \left(C + \log \frac{1}{1-z_1e^{-i\psi_1}}\right) \cdots \left(C + \log \frac{1}{1-z_n e^{-i\psi_n}}\right) \, d\mu_{F_j}(\psi),$$
for  $z \in \mathbb{D}^n$.
Let
$$G(\psi) = \left(C + \log \frac{1}{1-e^{-i\psi_1}}\right) \cdots \left(C + \log \frac{1}{1- e^{-i\psi_n}}\right), \quad \psi \in [0,2\pi)^n.$$
It is key to the proof that if we choose $C > 0$ sufficiently large, then
\begin{equation} \label{eq:recomp}
 \mre G(\psi) \approx H(\psi).
\end{equation}
In particular,
$$\mre \left(C + \log \frac{1}{1-z_1e^{-i\psi_1}}\right) \cdots \left(C + \log \frac{1}{1-z_n e^{-i\psi_n}}\right) \geq 0, $$
for $z \in \mathbb{D}^n$ and $\psi \in [0,2\pi)^n$, since the left-hand side is the Poisson integral of $\mre G(\psi - \cdot)$.
Therefore we fix $C$ as a constant such that \eqref{eq:recomp} holds. The choice of $C$ only depends on $n$. 

 With $\widehat{\mu}_{F_j}(\alpha) = \int_{\mathbb{T}^n} e^{-i(\alpha, \theta)} \, d\mu_{F_j}(\theta)$, we then have that
\begin{align*}
\|B\mu_{F_j}\|_{L^2(\mathbb{T}^n)}^2 &= \int_{\mathbb{T}^n} \int_{\mathbb{T}^n} H(\theta - \psi) \, d\mu_{F_j}(\psi) \, d\mu_{F_j}(\theta) \\&\approx \mre  \int_{\mathbb{T}^n} \int_{\mathbb{T}^n} G(\theta - \psi)  \, d\mu_{F_j}(\psi) \, d\mu_{F_j}(\theta) 
\approx \sum_{\alpha \in \mathbb{N}^n} \frac{|\widehat{\mu}_{F_j}(\alpha)|^2}{(\alpha_1+1)\cdots (\alpha_n+1)},
\end{align*}
where the last step follows by a computation with coefficients (including a straightforward approximation argument). A computation with Fourier coefficients also yields that
$$\|f_j\|_{\mathcal{D}(\mathbb{D}^n)}^2 \approx \sum_{\alpha \in \mathbb{N}^n} \frac{|\widehat{\mu}_{F_j}(\alpha)|^2}{(\alpha_1+1)\cdots (\alpha_n+1)} \approx \|B\mu_{F_j}\|_{L^2(\mathbb{T}^n)}^2 = C(F_j).$$
In view of \eqref{eq:capsum} we may therefore define the function
$$f = \sum_{j=1}^\infty f_j \in \mathcal{D}(\mathbb{D}^n).$$
We will demonstrate that $\lim_{z \to \zeta} \mre f(z) = \infty$, for every $\zeta \in E$. 

Since $\mre f_j$ is $n$-harmonic and non-negative, there is by Lemma~\ref{lem:poissonrep} a measure $d\mu_j = g_j \, d\theta + d\sigma_j$ such that $0 \leq g_j \in L^1(\mathbb{T}^n)$, $\sigma_j \geq 0$ is singular, and $\mre f_j(z) = P\mu_j(z)$ for $z \in \mathbb{D}^n$. By Corollary~\ref{cor:conv} the limit $\lim_{t\to1} \mre f_j(t e^{i\theta_1}, \ldots, te^{i\theta_n})$ exists for quasi-every, and thus almost every, $\theta \in [0,2\pi)^n$. Furthermore, by Fatou's lemma and the properties of an equilibrium measure, we have that
$$\lim_{t\to1} \mre f_j(t e^{i\theta_1}, \ldots, te^{i\theta_n}) \geq \mre \int_{\mathbb{T}^n} G(\psi-\theta) \, d\mu_{F_j}(\psi) \approx \int_{\mathbb{T}^n} H(\theta - \psi) \, d\mu_{F_j}(\psi) \geq 1$$
for quasi-every $\theta \in F_j$. On the other hand, by Lemma~\ref{lem:poissonrep}, we have that 
$$\lim_{t\to1} \mre f_j(t e^{i\theta_1}, \ldots, te^{i\theta_n}) = g_j(\theta)$$ for almost every $\theta \in [0,2\pi)^n$. We conclude that there is a constant $c > 0$, independent of $j$, such that $g_j(\theta) \geq c$ for almost every $\theta$ in the open set $G_j \supset E$. 

Note that $P[d\psi] \equiv 1$. Given $\vartheta \in [0,2\pi)^n$ and $\varepsilon > 0$, let 
$$I_{\vartheta, \varepsilon} = \{\psi \, : \, \max_j |e^{i\vartheta_j} - e^{i\psi_j}| < \varepsilon\}.$$
Then
$$P[\chi_{\mathbb{T}^n \setminus I_{\vartheta,\varepsilon}}  d\psi](z) \leq \sum_{j=1}^n \int_{|e^{i\vartheta_j} - e^{i\psi_j}| \geq \varepsilon} P_{r_j}(\theta_j - \psi_j) \, d\psi_j, $$
where $z = (r_1e^{i\theta_1},\ldots, r_ne^{i\theta_n}) \in \mathbb{D}^n$. Thus 
$$P[\chi_{I_{\vartheta,\varepsilon}}  d\psi](z) = 1 - P[\chi_{\mathbb{T}^n \setminus I_{\vartheta,\varepsilon}}  d\psi](z) \to 1,$$
as $z \to (e^{i\vartheta_1}, \ldots, e^{i\vartheta_n})$. We conclude, for $\zeta \in E \subset G_j$, that
$$\varliminf_{z \to \zeta} \mre f_j(z) \geq \varliminf_{z \to \zeta} cP[\chi_{G_j} d\psi](z) \geq c.$$
Hence, for $\zeta \in E$, 
\begin{equation*}
\varliminf_{z \to \zeta} \mre f(z) \geq \sum_{j=1}^\infty \varliminf_{z\to \zeta} \mre f_j(z) = \infty. \qedhere
\end{equation*}
\end{proof}

\begin{proof}[Proof of Theorem~\ref{thm:rectsharp}]
This follows at once from Theorem~\ref{thm:sharp} and the fact that a multiple series which converges in the sense of Pringsheim has uniformly bounded and convergent iterated Abel means. This can be deduced from the standard proof of Abel's theorem, see for example \cite{BromHardy}. 

For completeness, let us sketch a proof for our setting, in the case that $n=2$. Hence assume that $f \in \mathcal{D}(\mathbb{D}^2, \mathcal{H})$ and that $f(\theta)$ is Pringsheim convergent. Without loss of generality, we may suppose that $f(\theta) = 0$. Then the summation by parts formula
\begin{equation} \label{eq:sumbyparts}
f_{r_1, r_2}(\theta) = (1-r_1)(1-r_2)\sum_{N \in \mathbb{N}^2}  r_1^{N_1} r_2^{N_2} (S_{N_1, \theta_1} \otimes S_{N_2, \theta_2})f 
\end{equation}
is clearly justified, since both sides are absolutely convergent. Indeed, by assumption,
$$C = \sup_{N \in \mathbb{N}^2} \|(S_{N_1, \theta_1} \otimes S_{N_2, \theta_2})f\|_{\mathcal{H}} < \infty.$$
The formula \eqref{eq:sumbyparts} immediately shows that $\sup_{r_1, r_2} \|f_{r_1, r_2}(\theta)\|_{\mathcal{H}} < \infty$. Furthermore, for any $M \in \mathbb{N}^2$, splitting the summation into the four index regions 
\begin{align*}
N_1 &\leq M_1, N_2 \leq M_2; \quad N_1 \leq M_1, N_2 > M_2; \\
N_1 &> M_1, N_2 \leq M_2; \quad N_1 > M_1, N_2 > M_2,
\end{align*}
 yields the estimate
\begin{multline*}
\|f_{r_1, r_2}(\theta)\|_{\mathcal{H}} \leq C(M_1+1)(M_2+1) (1-r_1)(1-r_2) + C(M_1+1)(1-r_1) \\ + C(M_2+1)(1-r_2) + \sup_{N_1 > M_1, N_2 > M_2} \|(S_{N_1, \theta_1} \otimes S_{N_2, \theta_2})f\|_{\mathcal{H}}.
\end{multline*}
This evidently implies that $f_{r_1, r_2}(\theta) \to 0$ as $(r_1, r_2) \to (1,1)$.
\end{proof}

\bibliographystyle{amsplain} 
\bibliography{multfour} 
\end{document}